\theoremstyle{change}%
\newtheorem{definition}{Definition:}[section]%
\newtheorem{theorem}[definition]{Theorem:}%
\newtheorem{proposition}[definition]{Proposition:}%
\newtheorem{lemma}[definition]{Lemma:}%
\newtheorem{corollary}[definition]{Corollary:}%
{\theorembodyfont{\rmfamily} }%
{\theorembodyfont{\rmfamily} }%
\newenvironment{proof}
  {{\bf Proof:}}
  {\qquad \hspace*{\fill} $\Box$}%
\newcommand{\UC}{\mathcal{U}}%
\newcommand{\DC}{\mathcal{D}}%
\newcommand{\EC}{\mathcal{E}}%
\newcommand{\FC}{\mathcal{F}}%
\newcommand{\LC}{\mathcal{L}}%
\newcommand{\OC}{\mathcal{O}}%
\newcommand{\PC}{\mathcal{P}}%
\newcommand{\SC}{\mathcal{S}}%
\newcommand{\tm}{\times}%
\newcommand{\R}{\mathbb{R}}%
\newcommand{\rmd}{\mathrm{d}}%
\newcommand{\inv}{\mathrm{inv}}%
\newcommand{\cl}{\mathrm{cl}}%
\newcommand{\ep}{\varepsilon}%
\newcommand{\N}{\mathbb{N}}%
\newcommand{\Z}{\mathbb{Z}}%
\newcommand{\rme}{\mathrm{e}}%
\newcommand{\inner}{\mathrm{int}}%
\newcommand{\Mo}{\mathrm{Mo}}%
\newcommand{\Ly}{\mathrm{Ly}}%
\newcommand{\Fl}{\mathrm{Fl}}%
\newcommand{\dist}{\operatorname{dist}}%
\begin{document}

\title{Robustness of critical bit rates for practical stabilization of networked control systems}

\author{A. Da Silva\thanks{Imecc - Unicamp, Departamento de Matem\'atica, Rua S\'ergio Buarque de Holanda, 651, Cidade Universit\'aria Zeferino Vaz 13083-859, Campinas - SP, Brasil (e-mail: ajsilva@ime.unicamp.br)} \and C. Kawan\thanks{Faculty of Computer Science and Mathematics, University of Passau, 94032 Passau, Germany (e-mail: christoph.kawan@uni-passau.de)}}
\date{}%

\maketitle

\begin{abstract}
In this paper we address the question of robustness of critical bit rates for the stabilization of networked control systems over
digital communication channels. For a deterministic nonlinear system, the smallest bit rate above which practical stabilization (in the sense of set-invariance) can be achieved is measured by the invariance entropy of the system. Under the assumptions of chain controllability and a uniformly hyperbolic structure on the set of interest, we prove that the invariance entropy varies continuously with respect to system parameters. Hence, in this case the critical bit rate is robust with respect to small perturbations.%
\end{abstract}

{\small\bf Keywords:} {\small Networked control; control-affine system; uniform hyperbolicity; chain control set; robustness; invariance entropy}

%%\vspace{0.1cm}%

{\small\bf AMS Classification:} {\small 93C10, 93B35, 93D09, 37D20}%
% 	93C10   	Nonlinear systems
% 	93B35   	Sensitivity (robustness)
%	 	93D09   	Robust stability
%		37D20   	Uniformly hyperbolic systems (expanding, Anosov, Axiom A, etc.)

\section{Introduction}

In networked control systems, the communication between sensors, controllers and actuators is accomplished through a shared digital communication network. There are several aspects of such networks which put severe constraints on the available data rates. In the first place, the digital nature of the communication channels puts a limit on the number of bits that can be transmitted reliably in one unit of time. This naturally leads to the problem of determining the smallest channel capacity or bit rate above which a certain control objective such as stabilization can be achieved. Numerous authors have studied this problem both in deterministic and stochastic setups, for a variety of control objectives, under different assumptions on the network topologies and on the coding and control policies, see, e.g., the papers \cite{Col,CKa,DKa,LHe,LMi,MP1,Nea}, the monographs \cite{BYu,Ka1,MSa} and the survey papers \cite{FMi,Ne2}. In many of these works, expressions or estimates of the critical capacities in terms of dynamical entropies or Lyapunov exponents have been obtained.%

In practice, there are always unknown parameters in the system under consideration. Therefore, one important issue is the robustness of the critical bit rates under variation of system parameters. Since both entropy and Lyapunov exponents as functions of the dynamical system are known to have jump discontinuities, one cannot expect that critical bit rates behave robustly without appropriate assumptions on the system under consideration. To the best of our knowledge, this issue so far has only been addressed in \cite{MP1} for state estimation objectives. In the paper at hand, we identify a setup in which the desired robustness property is satisfied for the problem of practical stabilization (i.e., set-invariance).% 

The paper \cite{Nea} introduced the notion of topological feedback entropy as a measure for the smallest rate of information above which a compact subset of the state space can be rendered invariant by a controller which receives the state information via a noiseless discrete channel. An equivalent notion, called invariance entropy, was introduced in \cite{CKa}. The monograph \cite{Ka1} presents the foundations of a theory which aims at a characterization of invariance entropy in terms of dynamical quantities such as Lyapunov exponents and escape rates. This works particularly well under the assumption that the subset to be stabilized has a uniformly hyperbolic structure. In fact, for a uniformly hyperbolic chain control set of a control-affine system, the paper \cite{DSK} provides a closed expression of the invariance entropy in terms of Lyapunov exponents. Uniformly hyperbolic chain control sets are also known to vary continuously in the Hausdorff metric under variation of system parameters, cf.~\cite{CDu}. We use these results to prove that the invariance entropy of a uniformly hyperbolic chain control set varies continuously with respect to system parameters. Uniformly hyperbolic chain control sets arise around hyperbolic equilibrium points when the control range is sufficiently small and certain regularity assumptions are satisfied (as, e.g., controllability of the linearization at the equilibrium). In \cite{CLe} an example of a stirred tank reactor is studied, where this happens. A large class of algebraic examples for uniformly hyperbolic chain control sets was identified in \cite{DS2}. %

The paper is organized as follows. In Section \ref{sec_prelim}, we give a review of control-affine systems, additive cocycles, concepts of semicontinuity and the shadowing property of uniformly hyperbolic systems. In Section \ref{sec_datarate}, we provide a new justification that the invariance entropy is a measure for the smallest data rate above which a set can be rendered invariant by a symbolic coding and control scheme. Section \ref{sec_spectra} studies semicontinuity properties of spectral sets for additive and subadditive cocycles over control flows of parametrized control-affine systems. Finally, we prove our main result about the continuity of the invariance entropy on a uniformly hyperbolic chain control set in Section \ref{sec_mainres}. Here we essentially use the following three facts: the property of uniform hyperbolicity is robust under $C^1$-perturbations, the invariance entropy of a uniformly hyperbolic chain control set can be expressed as the infimum of the Morse spectrum of an additive cocycle and, by the results of Section \ref{sec_spectra}, this infimum depends continuously on parameters.%

%In this paper we address some issues about minimal channel capacities or data rates necessary for stabilization, which are relevant from the applied point of view. Many works have shown that the minimal %channel capacities for stabilization and for observability of nonlinear systems (both deterministic and stochastic) are intimately related to dynamical entropy notions. In case of observability, under %appropriate assumptions, the minimal capacity is in fact equal to the topological entropy of the system. From the applied point of view now several questions arise:%
%\begin{enumerate}
%\item[(1)] How can the minimal capacity or the corresponding entropy be actually computed?%
%\item[(2)] What happens to the minimal capacity under small perturbations of the system?%
%\item[(3)] How can one construct a system which operates close to the theoretical capacity limit?%
%\end{enumerate}
%Our aim is to address questions (1) and (2) for the problem of set stabilization in deterministic systems. In this case, the channel is modelled noisefree and can be considered as a bit-pipe, allowing %for the transmission of a certain number $R$ of bits per time unit.%

\section{Preliminaries}\label{sec_prelim}

{\bf Notation.} All manifolds considered in this paper are smooth, i.e., equipped with a $C^{\infty}$ differentiable structure. If $f:M \rightarrow N$ is a differentiable map between smooth manifolds $M$ and $N$, then $\rmd f(x):T_xM \rightarrow T_{f(x)}N$ denotes its derivative at $x\in M$. On a Riemannian manifold $M$, we always write $d(x,y)$ for the geodesic distance of two points $x,y \in M$. The norm on each tangent space $T_xM$ is simply denoted by $|\cdot|$. We write $\inner A$ and $\cl A$ for the interior and closure of a set $A$, respectively. By $\dist(x,A)$ we denote $\inf_{y\in A}d(x,y)$. If $u_1:[0,\tau_1] \rightarrow U$ and $u_2:[0,\tau_2]\rightarrow U$ are function, we write $u_1u_2:[0,\tau_1+\tau_2]\rightarrow U$ for their concatenation, i.e., $(u_1u_2)(t) = u_1(t)$ for $t\in[0,\tau_1]$ and $(u_1u_2)(t) = u_2(t-\tau_1)$ for $t\in[0,\tau_2]$. We also write $u^n$ for the concatenation of $n$ copies of $u$.%

\subsection{Control-affine systems}\label{subsec_cas}%

A \emph{control-affine system} is a control system given by differential equations of the form%
\begin{equation}\label{eq_cas}
  \dot{x} = f_0(x) + \sum_{i=1}^m u_i(t)f_i(x),\quad u\in\UC,%
\end{equation}
where $f_0,f_1,\ldots,f_m$ are smooth vector fields on a Riemannian manifold $M$ and $\UC = L^{\infty}(\R,U)$ for a compact and convex set $U \subset \R^m$ with $0\in\inner U$.%

By $\varphi(t,x,u)$ we denote the unique solution of \eqref{eq_cas} for the control function $u\in\UC$, satisfying the initial condition $\varphi(0,x,u) = x$. For simplicity (in fact without loss of generality), we assume that all solutions are defined on $\R$, and hence we obtain a \emph{transition map}%
\begin{equation*}
  \varphi:\R \tm M \tm \UC \rightarrow M,\quad (t,x,u) \mapsto \varphi(t,x,u).%
\end{equation*}
We also write $\varphi_{t,u}(x) = \varphi(t,x,u)$. The set $\UC$ of admissible control functions becomes a compact metrizable space with the weak$^*$-topology of $L^{\infty}(\R,\R^m) = L^1(\R,\R^m)^*$ and $\varphi$ can be extended to a continuous skew-product flow%
\begin{equation*}
  \Phi:\R \tm (\UC \tm M) \rightarrow \UC \tm M,\quad \Phi_t(u,x) = (\theta_tu,\varphi(t,x,u)),%
\end{equation*}
called the \emph{control flow} of the control system \eqref{eq_cas}. Here $\theta:\R\tm\UC\rightarrow\UC$, $\theta_tu = u(t+\cdot)$, is the \emph{shift flow} on $\UC$. These general facts can be found in \cite{CKl}.%

The \emph{set of points reachable from $x$ up to time $T$} is defined as%
\begin{equation*}
  \OC^+_{\leq T}(x) := \left\{ y \in M\ :\ \exists u\in\UC,\ t \in [0,T] \mbox{ with } y = \varphi(t,x,u) \right\},%
\end{equation*}
and $\OC^+(x) := \bigcup_{T>0}\OC_{\leq T}^+(x)$ is the \emph{positive orbit of $x$}. With $\OC^-_{\leq T}(x)$ and $\OC^-(x)$ we denote the corresponding sets for the time-reversed system. We say that system \eqref{eq_cas} is \emph{locally accessible from $x$} if $\inner \OC^{\pm}_{\leq T}(x) \neq \emptyset$ for all $T>0$. A sufficient condition for local accessibility is the \emph{Lie algebra rank condition}. This condition is satisfied at $x\in M$ if the Lie algebra $\LC$ generated by the vector fields $f_0,f_1,\ldots,f_m$ satisfies $\LC(x) = \{ f(x) : f \in \LC\} = T_xM$.%

A set $D\subset M$ is a \emph{control set} of \eqref{eq_cas} if it is maximal w.r.t.~set inclusion with the following properties:%
\begin{enumerate}
\item[(i)] $D$ is \emph{controlled invariant}, i.e., for each $x\in D$ there is $u\in\UC$ with $\varphi(\R_+,x,u) \subset D$.%
\item[(ii)] \emph{Approximate controllability} holds on $D$, i.e., $D \subset \cl\OC^+(x)$ for all $x\in D$.%
\end{enumerate}
The \emph{lift of a control set} $D$ to $\UC \tm M$ is defined by%
\begin{equation*}
  \DC := \cl\left\{ (u,x) \in \UC \tm M\ :\ \varphi(\R,x,u) \subset \inner D\right\}.%
\end{equation*}
Control sets with nonempty interior have the \emph{no-return property}: If $x \in D$ and $\varphi(\tau,x,u) \in D$ for some $\tau>0$ and $u\in\UC$, then $\varphi(t,x,u) \in D$ for all $t\in[0,\tau]$.%

A \emph{chain control set} of \eqref{eq_cas} is a set $E \subset M$ which is maximal with the following properties:%
\begin{enumerate}
\item[(i)] $E$ is \emph{full-time controlled invariant}, i.e., for each $x\in E$ there is $u\in\UC$ with $\varphi(\R,x,u) \subset E$.%
\item[(ii)] \emph{Chain controllability} holds on $E$, i.e., for each two $x,y\in E$ and all $\ep,T>0$ there are $n\in\N$, controls $u_0,\ldots,u_{n-1}\in\UC$, states $x_0 = x,x_1,\ldots,x_{n-1},x_n = y$ and times $t_0,\ldots,t_{n-1}\geq T$ such that%
\begin{equation*}
  d(\varphi(t_i,x_i,u_i),x_{i+1}) < \ep,\quad i = 0,1,\ldots,n-1.%
\end{equation*}
\end{enumerate}
The \emph{lift of a chain control set} $E$ to $\UC \tm M$ is defined by%
\begin{equation*}
  \EC := \left\{ (u,x) \in \UC \tm M\ :\ \varphi(\R,x,u) \subset E \right\}.%
\end{equation*}
The set $\EC$ is a closed invariant set of the control flow and it is compact if $E$ is compact. If $D$ is a control set with nonempty interior and local accessibility holds on $\inner D$, then $D$ is contained in a unique chain control set (see \cite[Ch.~4]{CKl}). Moreover, the lifts of the chain control sets are precisely the maximal invariant chain transitive sets of the control flow (see Subsect.~\ref{subsec_cocycles} for the definition of chain transitivity).%

A compact chain control set $E$ is called \emph{uniformly hyperbolic} if there exists a decomposition%
\begin{equation*}
  T_xM = E^-_{u,x} \oplus E^+_{u,x}\quad \forall (u,x)\in\EC%
\end{equation*}
with subspaces $E^{\pm}_{u,x}$ satisfying%
\begin{enumerate}
\item[(H1)] $\rmd\varphi_{t,u}(x)E^{\pm}_{u,x} = E^{\pm}_{\Phi_t(u,x)}$ for all $(u,x)\in\EC$ and $t\in\R$.%
\item[(H2)] There exist constants $0 < c \leq 1$ and $\lambda>0$ such that for all $(u,x)\in\EC$,%
\begin{eqnarray*}
  \left|\rmd\varphi_{t,u}(x)v\right| &\leq& c^{-1}\rme^{-\lambda t}|v| \mbox{\quad for all\ } t\geq0,\ v\in E^-_{u,x},\\
  \left|\rmd\varphi_{t,u}(x)v\right| &\geq& c\rme^{\lambda t}|v| \mbox{\quad for all\ } t\geq0,\ v\in E^+_{u,x}.%
\end{eqnarray*}
\end{enumerate}
From (H1) and (H2) it follows that $E^{\pm}_{u,x}$ depend continuously on $(u,x)$, cf.~\cite[Ch.~6]{Ka1}. We write%
\begin{equation*}
  J^+\rmd\varphi_{t,u}(x) := \bigl|\det\rmd\varphi_{t,u}(x)|_{E^+_{u,x}}:E^+_{u,x} \rightarrow E^+_{\Phi_t(u,x)}\bigr|%
\end{equation*}
for the unstable determinant of $\rmd\varphi_{t,u}$ and note that%
\begin{equation}\label{eq_unstabledet_cocycle}
  \gamma_t(u,x) := \log J^+\rmd\varphi_{t,u}(x),\quad \gamma:\R \tm (\UC \tm M) \rightarrow \R,%
\end{equation}
defines a continuous additive cocycle over $\EC$ for the control flow $\Phi$, i.e., it satisfies%
\begin{equation*}
  \gamma_{t+s}(u,x) = \gamma_t(u,x) + \gamma_s(\Phi_t(u,x)) \mbox{\quad for all\ } t,s\in\R,\ (u,x) \in \EC.%
\end{equation*}

Let $K,Q \subset M$ be sets so that $K$ is compact and for each $x\in K$ there is $u\in\UC$ with $\varphi(\R_+,x,u) \subset Q$. Then we call $(K,Q)$ an \emph{admissible pair} and define its \emph{invariance entropy} $h_{\inv}(K,Q)$ as follows. A set $\SC\subset\UC$ is called $(\tau,K,Q)$-spanning (for some $\tau>0$) if for each $x\in K$ there is $u\in\SC$ with $\varphi([0,\tau],x,u) \subset Q$. The minimal cardinality of such a set is denoted by $r_{\inv}(\tau,K,Q)$, where the value $+\infty$ is allowed. Then%
\begin{equation*}
  h_{\inv}(K,Q) := \limsup_{\tau\rightarrow\infty}\frac{1}{\tau}\log r_{\inv}(\tau,K,Q),%
\end{equation*}
where $\log$ stands for the natural logarithm. We have the following result, cf.~\cite{DSK}.%

\begin{theorem}\label{thm_ie_formula}
Consider a uniformly hyperbolic chain control set $E$ of system \eqref{eq_cas} with nonempty interior. Additionally assume that%
\begin{enumerate}
\item[(i)] the Lie algebra rank condition holds on $\inner E$ and%
\item[(ii)] for each $u\in\UC$ there exists a unique $x\in E$ with $(u,x) \in \EC$, i.e., $\EC$ is a graph over $\UC$.%
\end{enumerate}
Then $E$ is the closure of a control set $D$ and for every compact set $K \subset D$ with positive volume,%
\begin{equation*}
  h_{\inv}(K,E) = \inf_{(u,x) \in \EC}\limsup_{t\rightarrow\infty}\frac{1}{t}\log J^+\rmd\varphi_{t,u}(x).%
\end{equation*}
\end{theorem}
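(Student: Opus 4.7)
The plan is to establish the two assertions in the theorem separately: first, that $E = \cl D$ for some control set $D$; second, the formula for $h_{\inv}(K,E)$.

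For the first assertion, I would exploit the graph property $\EC \to \UC$ together with LARC on $\inner E$. The graph property guarantees that every $(u,x) \in \EC$ lies on a full two-sided trajectory in $E$, and, combined with the shadowing lemma for uniformly hyperbolic sets recalled in Section \ref{sec_prelim}, this allows any $(\ep,T)$-chain between two interior points of $E$ to be approximated by an exact trajectory staying near $E$. Together with LARC, this upgrades chain controllability to approximate controllability on $\inner E$, producing a control set $D \subset E$ with $\inner D \neq \emptyset$ whose closure equals $E$ by the maximality of $E$ as a chain control set.

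For the entropy formula I would prove the two inequalities separately. For the lower bound, fix $\tau > 0$ and a $(\tau,K,E)$-spanning set $\SC$, and for each $u \in \SC$ let $x_u$ be the unique point with $(u,x_u) \in \EC$. Setting $K_\tau(u) := \{x \in K : \varphi([0,\tau],x,u) \subset E\}$, the spanning property gives $K \subset \bigcup_{u \in \SC} K_\tau(u)$. Uniform hyperbolicity together with the graph property implies that $K_\tau(u)$ sits inside a tube about the local strong-stable set through $x_u$ whose unstable cross-section has radius at most $\mathrm{const}\cdot\rme^{-\lambda\tau}$; a change-of-variables estimate using the unstable Jacobian $J^+\rmd\varphi_{\tau,u}(x_u)$ and the continuous dependence of $E^\pm_{u,x}$ on $\EC$ yields
\[
  \mathrm{vol}(K_\tau(u)) \leq C\,\rme^{-\gamma_\tau(u,x_u)}.
\]
Since $\mathrm{vol}(K) > 0$, this gives $r_{\inv}(\tau,K,E) \geq C^{-1}\mathrm{vol}(K)\inf_{u \in \SC}\rme^{\gamma_\tau(u,x_u)}$, and the lower bound follows after taking logarithms, dividing by $\tau$ and passing to the limit.

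For the upper bound, I would fix $\ep > 0$ and pick $(u^*,x^*) \in \EC$ with $\limsup_{\tau \to \infty} \gamma_\tau(u^*,x^*)/\tau$ within $\ep$ of the infimum. The shadowing property allows every trajectory in $E$ to be tracked by a controlled trajectory close to the orbit of $(u^*,x^*)$; consequently, a Bowen-type cover of $K$ along the unstable directions of $\varphi_{\tau,u^*}$ lifts, after prepending and appending short steering controls obtained from approximate controllability on $\inner D$, to a $(\tau,K,E)$-spanning family of cardinality bounded by $\rme^{\gamma_\tau(u^*,x^*) + o(\tau)}$. Letting $\tau \to \infty$ and then $\ep \downarrow 0$ gives the matching upper bound. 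The main obstacle, and the step on which I would spend the bulk of the effort, is this upper bound: it requires a quantitative shadowing lemma together with uniform volume-distortion bounds for $J^+\rmd\varphi_{\tau,u^*}$ along the whole orbit, and it requires splicing in the steering controls without spoiling the exponential cardinality estimate, where the no-return property of control sets and the uniform structure provided by (H1)--(H2) play a crucial role. The lower bound is structurally cleaner but still requires a non-trivial exchange of $\inf$ and $\limsup$; I would handle it by identifying the infimum on the right-hand side with the minimum of the Morse spectrum of the additive cocycle $\gamma$ over the chain-transitive set $\EC$, where the $\limsup$ is realised by an exact Lyapunov exponent.
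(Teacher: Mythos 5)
This theorem is not proved in the paper at hand --- it is imported from \cite{DSK} --- so your sketch has to be measured against that proof. Your first assertion ($E=\cl D$) and your lower bound follow essentially the known route: the closure statement is the Colonius--Du argument (Theorem \ref{thm_closure}: shadowing plus local accessibility; the graph property is not actually needed there), and the volume estimate $\mathrm{vol}(K_\tau(u))\leq C\,\rme^{-\gamma_\tau(u,x_u)}$ combined with the identification of the right-hand side with $\inf\Sigma_{\Mo}(\gamma,\EC)$ is the correct mechanism. You still owe a justification that $K_\tau(u)$ lies in a thin tube around the stable set through $x_u$: a point of $K_\tau(u)$ has its lifted orbit in $\UC\tm E$ only on $[0,\tau]$, not on $\R$, so it need not be near $\EC$ (where the hyperbolic estimates live) at time $0$; one has to extend the splitting and the distortion estimates to a neighborhood of $\EC$ and accept that the trajectory is only close to that of $x_u$ after a bounded initial time. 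This is fixable and consistent with the actual argument.

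The genuine gap is the upper bound, which you identify as the main obstacle but then dispose of with two claims that do not hold as stated. First, the shadowing lemma shadows pseudo-orbits by true orbits; it does not allow ``every trajectory in $E$ to be tracked by a controlled trajectory close to the orbit of $(u^*,x^*)$''. Second, and decisively, a Bowen-type cover with steering controls only prepended and appended cannot yield a $(\tau,K,E)$-spanning family: under any single control, the points of a covering cell off the stable manifold of the reference trajectory drift along the unstable directions and in general leave $E$ long before time $\tau$; the whole content of the theorem is that this drift must be corrected by interleaving control adjustments throughout $[0,\tau]$, and the number of distinct correction patterns is precisely what produces the count $\rme^{\gamma_\tau+o(\tau)}$. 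Carrying this out needs room to perturb the control inside the constraint set $U$ and local controllability along the reference orbit, which is why the actual proof (quoted in this paper as \cite[Prop.~5]{DSK} in the proof of Theorem \ref{thm_iedr_1}) works with $\tau$-periodic $(u,x)\in\inner\UC\tm\inner Q$ constructed via the LARC: interior control values give the perturbation room, and periodicity makes the per-period estimate $\frac{1}{\tau}\log J^+\rmd\varphi_{\tau,u}(x)$ uniform over arbitrarily many periods. One then needs the additional reduction --- the Floquet/Morse-spectrum step invoked in Proposition \ref{prop_robustness} --- showing that such periodic pairs already realize the infimum over $\EC$. Your inf-realizing $(u^*,x^*)\in\EC$ need be neither periodic nor have $u^*$ valued in $\inner U$, so neither ingredient is available, and the admissibility of the perturbed controls is never addressed; as it stands the upper bound does not go through.
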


To make it easier to refer to the assumptions in this theorem, we introduce the following notion.%

\begin{definition}
Let $E$ be a compact chain control set of a control-affine system. We say that $E$ is {\bf regularly uniformly hyperbolic} if the following hold:%
\begin{enumerate}
\item[(a)] $E$ is uniformly hyperbolic.%
\item[(b)] $\inner E \neq \emptyset$ and the Lie algebra rank condition holds on $\inner E$.%
\item[(c)] $\EC$ is a graph over $\UC$.%
\item[(d)] $\EC$ is an isolated invariant set of the control flow.%
\end{enumerate}
\end{definition}

The meaning of condition (d) will become clear later. The following theorem provides a sufficient condition for assumption (ii) in Theorem \ref{thm_ie_formula} to be satisfied, cf.~\cite{Ka2}.%

\begin{theorem}\label{thm_str}
Let $E$ be a uniformly hyperbolic chain control set of system \eqref{eq_cas} and assume that $\EC$ is an isolated invariant set of $\Phi$. Let $u_0$ be a constant control function with value in $\inner U$ and suppose that the following hypotheses are satisfied:%
\begin{enumerate}
\item[(i)] The Lie algebra rank condition holds on $E$.%
\item[(ii)] For each $x$ with $(u_0,x)\in\EC$ and each $\rho\in(0,1]$ it holds that $x\in\inner\OC^+_{\rho}(x)$, where $\OC^+_{\rho}(x)=\{\varphi(t,x,u) : t\geq 0,\ u\in\UC^{\rho}\}$ with
\begin{equation*}
  \UC^{\rho} = \{u \in \UC\ :\ u(t) \in u_0 + \rho(U - u_0) \mbox{ a.e.}\}.%
\end{equation*}
\end{enumerate}
Then $\EC$ is the graph of a continuous function $\UC \rightarrow E$.%
\end{theorem}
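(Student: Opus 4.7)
The plan is to combine the hyperbolic invariant-set machinery with the small-control accessibility hypothesis (ii) in three stages: first establish a local product structure on $\EC$, then pin down the fiber of $\EC$ over the constant control $u_0$ to a single point using (ii), and finally propagate this uniqueness to every fiber by a connectedness argument. Since $\EC$ is a compact, isolated, uniformly hyperbolic invariant set of the control flow $\Phi$, the shadowing lemma for skew-product flows yields local stable and unstable manifolds $W^s_{\mathrm{loc}}(u,x),W^u_{\mathrm{loc}}(u,x)\subset\{u\}\times M$ through every $(u,x)\in\EC$, of dimensions $\dim E^{\mp}_{u,x}$, depending continuously on $(u,x)$; moreover, for $(u,x),(u,y)\in\EC$ sufficiently close, the intersection $W^s_{\mathrm{loc}}(u,x)\cap W^u_{\mathrm{loc}}(u,y)$ is a single point lying in $\EC$.

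For uniqueness over $u_0$, let $(u_0,x^*)\in\EC$. Hypothesis (ii) together with the LARC on $E$ and local accessibility will force $x^*$ to lie in the interior of a $\rho$-restricted control set $D_\rho\subset E$ for every $\rho\in(0,1]$, whose lift over the constant control $u_0$ is single-valued; letting $\rho\to 0^+$ and invoking the no-return property of control sets with nonempty interior then yields $\EC\cap(\{u_0\}\times M)=\{(u_0,x^*)\}$. A competing $y\neq x^*$ in this fiber would, by the product structure, produce the local bracket $[x^*,y]_{u_0}\in W^s_{\mathrm{loc}}(u_0,x^*)\cap W^u_{\mathrm{loc}}(u_0,y)\subset\EC$, giving rise to a pair of distinct hyperbolic trajectories over $u_0$ whose forward-backward separation is incompatible with the self-reachability $x^*\in\inner\OC^+_\rho(x^*)$ afforded by arbitrarily small controls.

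To extend to every $u\in\UC$, set $A:=\{u\in\UC : \EC\cap(\{u\}\times M)\text{ is a singleton}\}$. This set contains the entire shift-orbit $\theta_\R u_0$; it is closed in $\UC$ by compactness of $\EC$ and upper semicontinuity of fibers, and open by the continuity of $W^{s,u}_{\mathrm{loc}}$ and the transverse-intersection property established above. Since $\UC$ is connected (by convexity of $U$), $A=\UC$. The induced projection $\pi_\UC:\EC\to\UC$ is then a continuous bijection between compact Hausdorff spaces, hence a homeomorphism, and its inverse provides the desired continuous graph.

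\textbf{Main obstacle.} The crux is the second stage. Condition (ii) is a controllability statement about shrinking perturbations of $u_0$, whereas uniqueness in the $u_0$-fiber of $\EC$ is a rigidity statement. Bridging the two requires carefully combining control-set theory (to identify the nested sets $D_\rho$ and use their no-return property), the LARC on $E$ (to ensure $\inner D_\rho\neq\emptyset$ and hence that control-set theory applies), and the hyperbolic local product structure (to rule out any candidate $y\neq x^*$ by converting it into a hyperbolic separation that contradicts small-control self-reachability). Once this rigidity is in place, the remaining stages are essentially standard consequences of compactness, connectedness of $\UC$, and continuity of $W^{s,u}_{\mathrm{loc}}$.
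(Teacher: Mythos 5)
There is no in-paper proof to compare against -- the paper imports Theorem \ref{thm_str} from \cite{Ka2} -- so your proposal must stand on its own, and as written it has two genuine gaps. The first is the one you flag yourself: Stage 2 is not an argument but a restatement of the goal. You assert that hypothesis (ii) ``will force'' $x^*$ to lie in the interior of a $\rho$-restricted control set $D_\rho$ ``whose lift over the constant control $u_0$ is single-valued,'' and that a competing point $y\neq x^*$ in the fiber would be ``incompatible'' with $x^*\in\inner\OC^+_\rho(x^*)$; no mechanism is given for either claim. Single-valuedness of a lift over $u_0$ is precisely the kind of graph statement the theorem asserts, so invoking it for $D_\rho$ is circular, and the ``forward--backward separation'' of two hyperbolic trajectories over $u_0$ is in no visible conflict with small-control self-reachability: a priori both $x^*$ and $y$ could be distinct hyperbolic equilibria of the $u_0$-flow lying in $\inner D$, each satisfying (ii), and nothing in your sketch rules this configuration out. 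The fact that (ii) is assumed for \emph{every} $\rho\in(0,1]$ strongly suggests that the restricted systems must be used in an essential, quantitative way (a continuation in $\rho$); you gesture at this with ``letting $\rho\to 0^+$'' but extract nothing from the limit.

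The second gap is Stage 3. Writing $\EC_u:=\{x\in M:(u,x)\in\EC\}$, closedness of $A=\{u\in\UC:\EC_u \text{ is a singleton}\}$ does not follow from upper semicontinuity of the fibers: u.s.c. gives $\limsup_{v\to u}\EC_v\subset\EC_u$, which is the wrong inclusion -- a large fiber can perfectly well be the limit of singleton fibers. Openness fails for a different reason: to force two points $y_1\neq y_2$ of a nearby fiber $\EC_v$ to coincide you would need expansivity, i.e. that $\varphi(t,y_1,v)$ and $\varphi(t,y_2,v)$ stay uniformly close for \emph{all} $t\in\R$; but these trajectories lie over the shifted controls $\theta_t v$, which for large $|t|$ are nowhere near $u$, so smallness of the fibers over controls close to $u$ gives no control on the separation at large times, and local product structure by itself never makes a fiber a singleton (a product of the base with an Anosov diffeomorphism has local product structure and huge fibers). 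Note also that $A$ is invariant along shift orbits, since $\varphi_{t,u}$ maps $\EC_u$ bijectively onto $\EC_{\theta_t u}$, but the shift on $\UC$ is not minimal, so this plus connectedness still does not finish. In short, the two load-bearing steps -- uniqueness of the $u_0$-fiber and its propagation to all of $\UC$ -- are exactly the ones left unproved.
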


\subsection{Cocycles and spectra}\label{subsec_cocycles}%

Let $(X,d)$ be a metric space and $\Phi:\R \tm X \rightarrow X$, $(t,x)\mapsto\Phi_t(x)$, a continuous flow. A set $A \subset X$ is $\Phi$-invariant if $\Phi_t(x) \in A$ whenever $x\in A$ and $t\in\R$. If $A \subset X$ is a compact $\Phi$-invariant set, a continuous map $\gamma:\R \tm A \rightarrow \R$, $(t,x) \mapsto \gamma_t(x)$, is called an \emph{additive cocycle for $\Phi$ over $A$} if%
\begin{equation*}
  \gamma_{t+s}(x) = \gamma_t(x) + \gamma_s(\Phi_t(x)) \mbox{\quad for all\ } t,s\in\R,\ x\in A.%
\end{equation*}
Any limit of the form%
\begin{equation}\label{eq_deflyapexp}
  \lambda(x;\gamma) := \lim_{t\rightarrow\infty}\frac{1}{t}\gamma_t(x),\quad x\in A,%
\end{equation}
is called a \emph{$\gamma$-Lyapunov exponent}. The \emph{Lyapunov spectrum of $\gamma$ over $A$} is defined by%
\begin{equation*}
  \Sigma_{\Ly}(\gamma,A) := \left\{\lambda \in \R\ :\ \lambda = \lambda(x;\gamma) \mbox{ for some } x \in A\right\}.%
\end{equation*}
We note that the limit in \eqref{eq_deflyapexp} not necessarily exists for every $x\in A$. However, it exists for any periodic point $x$ and in this case satisfies%
\begin{equation*}
  \lambda(x;\gamma) = \frac{1}{T}\gamma_T(x),%
\end{equation*}
where $T>0$ is the period of $x$. For $\ep,T>0$, an \emph{$(\ep,T)$-chain} $\zeta$ for $\Phi$ is given by $n\in\N$, points $x_0,\ldots,x_n \in X$ and times $T_0,\ldots,T_{n-1} \geq T$ so that%
\begin{equation*}
  d(\Phi_{T_i}(x_i),x_{i+1}) < \ep \mbox{\quad for\ } i = 0,1,\ldots,n-1.%
\end{equation*}
If $x_0 = x_n$, the chain $\zeta$ is called \emph{periodic}. The \emph{$\gamma$-Morse exponent} of $\zeta$ is given by%
\begin{equation*}
  \gamma(\zeta) := \frac{1}{\sum_{i=0}^{n-1}T_i}\sum_{i=0}^{n-1}\gamma_{T_i}(x_i).%
\end{equation*}
We say that the chain $\zeta$ belongs to $A$ if the initial point $x_0$ and the endpoint $x_n$ are in $A$. The \emph{Morse spectrum} of $\gamma$ over $A$ is defined by%
\begin{equation*}
  \Sigma_{\Mo}(\gamma,A) := \bigcap_{\ep,T>0} \cl \left\{\gamma(\zeta) : \zeta \mbox{ is an } (\ep,T)\mbox{-chain belonging to } A \right\}.%
\end{equation*}
The set $A$ is called chain transitive if any two points in $A$ can be joined by an $(\ep,T)$-chain for any $\ep,T>0$. We will use the following properties of the Lyapunov and the Morse spectrum, which hold whenever the restriction of $\Phi$ to $A$ is chain transitive and $A$ is compact (cf.~\cite{KSt,SMS}):%
\begin{enumerate}
\item[(1)] The Morse spectrum $\Sigma_{\Mo}(\gamma,A)$ is a compact interval.%
\item[(2)] $\Sigma_{\Mo}(\gamma,A)$ is the smallest closed interval containing $\Sigma_{\Ly}(\gamma,A)$.%
\item[(3)] The left and right endpoints of $\Sigma_{\Mo}(\gamma,A)$ are $\gamma$-Lyapunov exponents.%
\item[(4)] Periodic $(\ep,T)$-chains are sufficient to obtain $\Sigma_{\Mo}(\gamma,A)$, i.e.,%
\begin{equation*}
  \Sigma_{\Mo}(\gamma,A) := \bigcap_{\ep,T>0} \cl \left\{\gamma(\zeta) : \zeta \mbox{ is a periodic } (\ep,T)\mbox{-chain belonging to } A \right\}.%
\end{equation*}
\end{enumerate}
We also recall the notion of a \emph{subadditive cocycle for $\Phi$ over $A$}. This is a continuous map $\kappa:\R\tm A \rightarrow \R$ so that%
\begin{equation*}
  \kappa_{t+s}(x) \leq \kappa_t(x) + \kappa_s(\Phi_t(x)) \mbox{\quad for all\ } t,s\in\R,\ x\in A.%
\end{equation*}

\subsection{Upper and lower semicontinuity}

A map $f:X \rightarrow \R$, defined on a metric space $(X,d)$, is called \emph{upper semicontinuous} at $x_0 \in X$ if%
\begin{equation*}
  \limsup_{x\rightarrow x_0}f(x) \leq f(x_0).%
\end{equation*}
It is called \emph{lower semicontinuous} at $x_0$ if%
\begin{equation*}
  \liminf_{x\rightarrow x_0}f(x) \geq f(x_0).%
\end{equation*}
Obviously, $f$ is continuous at $x_0$ iff it is both upper and lower semicontinuous at $x_0$.%

The concepts of upper and lower semicontinuity are also defined for set-valued mappings (see, e.g., \cite{AFr}). Let $(X,d_X)$ and $(Y,d_Y)$ be metric spaces and $f:X \rightarrow \PC_0(Y)$ a map from $X$ into the set of nonempty subsets of $Y$. Then $f$ is called \emph{upper semicontinuous} at $x_0\in X$ if for every $\ep>0$ there is $\delta>0$ so that $d_X(x,x_0)<\delta$ implies $\sup_{y\in f(x)}\dist_Y(y,f(x_0))<\ep$. It is called \emph{lower semicontinuous} at $x_0$ if for every $\ep>0$ there is $\delta>0$ so that $d_X(x,x_0)<\delta$ implies $\sup_{y\in f(x_0)}\dist_Y(y,f(x)) < \ep$.%

The set-valued map $f$ is continuous w.r.t.~the Hausdorff metric on $\PC_0(Y)$ at $x_0$ iff it is lower and upper semicontinuous at $x_0$. Moreover, if $f$ has closed values and $Y$ is compact, then upper semicontinuity is equivalent to%
\begin{equation*}
  f(x_0) \supset \limsup_{x\rightarrow x_0}f(x) := \left\{y\in Y\ :\ \exists x_k \rightarrow x \mbox{ in } X \mbox{ and } y_k \in f(x_k) \mbox{ with } y_k \rightarrow y\right\}.%
\end{equation*}
Moreover, if $f$ is compact-valued, lower semicontinuity is equivalent to%
\begin{equation*}
  f(x_0) \subset \liminf_{x\rightarrow x_0}f(x) := \left\{y\in Y\ :\ \forall x_k \rightarrow x \mbox{ in } X\ \exists y_k \in f(x_k) \mbox{ with } y_k \rightarrow y\right\}.%
\end{equation*}

\subsection{The shadowing lemma for skew-product systems}\label{subsec_shadowing}

In the following, we recall the shadowing lemma \cite[Lem.~2.11]{MZh} for discrete-time skew-product systems. Let $B$ be a compact metric space and $M$ a Riemannian manifold. Let $f:B\tm M \rightarrow B\tm M$ be a homeomorphism of the form $f(b,x) = (V(b),\Psi(b,x))$, where $V:B\rightarrow B$ is also a homeomorphism. Suppose that the map $\Psi_b := \Psi(b,\cdot):M\rightarrow M$ is a diffeomorphism for each $b\in B$ whose derivative depends continuously on $(b,x)$. We write $O(b,x) = \{f^k(b,x) : k\in\Z\}$ for the orbit through $(b,x)$. A sequence $(b_n,x_n)_{n\in\Z}$ in $B\tm M$ is called a \emph{$\delta$-pseudo-orbit} if%
\begin{equation*}
  b_{n+1} = V(b_n) \mbox{\quad and\quad} d(f(b_n,x_n),(b_{n+1},x_{n+1})) < \delta \mbox{\quad for all\ } n\in\Z,%
\end{equation*}
where $d(\cdot,\cdot)$ is any fixed metric on $B\tm M$. The pseudo-orbit $(b_n,x_n)_{n\in\Z}$ is \emph{$\ep$-shadowed} by an orbit $O(b,x)$ if%
\begin{equation*}
  b = b_0 \mbox{\quad and\quad} d(f^n(b,x),(b_n,x_n)) < \ep \mbox{\quad for all\ } n\in\Z.%
\end{equation*}
A closed $f$-invariant set $\Lambda \subset B\tm M$ is called \emph{uniformly hyperbolic} if there are constants $C \geq 1$ and $\lambda \in (0,1)$ and a splitting%
\begin{equation*}
  T_xM = E^+_{b,x} \oplus E^-_{b,x} \mbox{\quad for all\ } (b,x) \in \Lambda%
\end{equation*}
such that for all $(b,x)\in\Lambda$ the following conditions holds:%
\begin{enumerate}
\item[(1)] $\rmd\Psi_b(x) E^{\pm}_{b,x} = E^{\pm}_{f(b,x)}$.%
\item[(2)] For all $n\geq0$,%
\begin{eqnarray*}
  |\rmd\Psi_b(x) f^n(b,x) v| &\leq& C\lambda^n |v| \mbox{\quad for all } v \in E^-_{b,x},\\
  |\rmd\Psi_b(x) f^n(b,x) v| &\geq& C^{-1}\lambda^{-n} |v| \mbox{\quad for all } v \in E^+_{b,x}.%
\end{eqnarray*}
\end{enumerate}
The set $\Lambda$ is called \emph{isolated invariant} if there is a neighborhood $W$ of $\Lambda$ in $B\tm M$ so that $O(b,x) \subset W$ implies $O(b,x) \subset \Lambda$. Equivalently, $\Lambda$ is the largest invariant set in $W$.%

The shadowing lemma reads as follows.%

\begin{lemma}\label{lem_shadowing}
Let $\Lambda$ be a uniformly hyperbolic set of $f:B\tm M \rightarrow B\tm M$. Then there are neighborhoods $U$ of $\Lambda$ and $W$ of $f$ (in the $C^{0,1}$-topology) satisfying the following properties:%
\begin{enumerate}
\item[(i)] For any $g\in W$ and any $\ep>0$ there is $\delta>0$ so that every $\delta$-pseudo-orbit $(b_n,x_n)_{n\in\Z}$ of $g$ in $U$ is $\ep$-shadowed by an orbit $O(b,x)$ of $g$.%  
\item[(ii)] There is an $\ep_0>0$ so that for $0 < \ep < \ep_0$ the $\ep$-shadowing orbit is uniquely determined by the $\delta$-pseudo-orbit.% 
\item[(iii)] If $\Lambda$ is an isolated invariant set for $f$, then any $g\in W$ has an isolated invariant uniformly hyperbolic set $\Delta \subset U$ and the shadowing orbit $O(b,x)$ is in $\Delta$.%
\end{enumerate}
\end{lemma}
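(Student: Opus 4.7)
The plan is to carry out the classical hyperbolic shadowing argument, adapted to the skew-product setting in which hyperbolicity lives only on the fiber direction. The decisive observation is that once a $\delta$-pseudo-orbit $(b_n,x_n)_{n\in\Z}$ is fixed, the base sequence $(b_n)_{n\in\Z}$ is already a genuine $V$-orbit by the condition $b_{n+1}=V(b_n)$, so any shadowing orbit must project to the very same base sequence and only the fiber coordinates $x_n$ need to be corrected. This reduces part (i) to a nonautonomous fiber problem along the $V$-orbit of $b_0$.

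To execute this, first extend the splitting $E^+\oplus E^-$ from $\Lambda$ to a neighborhood $U_0$ by continuous projections, and trivialize the tangent bundle along the pseudo-orbit by normal coordinates, identifying each $T_{x_n}M$ with $\R^d=\R^{d_+}\oplus\R^{d_-}$. In these coordinates, the fiber maps become local diffeomorphisms $F_n:\R^d\to\R^d$ of the form $F_n=A_n+R_n$, where $A_n$ is the hyperbolic linear part and $R_n$ is a small Lipschitz remainder controlled by $\delta$ and by the $C^{0,1}$-distance from $g$ to $f$. A true $g$-orbit shadowing the pseudo-orbit corresponds to a bounded sequence $(v_n)_{n\in\Z}$ in $\R^d$ solving a nonlinear recursion; I would rewrite this as a fixed-point equation $v=\mathcal{L}(v)$ on $\ell^\infty(\Z,\R^d)$. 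The linear part of $\mathcal{L}$ is the inverse of the hyperbolic cocycle $(A_n)$, obtained by summing stably backward on $\R^{d_-}$ and unstably forward on $\R^{d_+}$; its norm is controlled uniformly in terms of the hyperbolic constants $C$ and $\lambda$. For $g\in W$ and $\delta$ small, the nonlinear part has arbitrarily small Lipschitz constant, so Banach's fixed-point theorem delivers a unique small solution, which exponentiates back to the shadowing orbit.

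Uniqueness in part (ii) follows from the same contraction: two distinct bounded $\ep$-shadowing orbits with $\ep<\ep_0$ correspond to two fixed points of $\mathcal{L}$ in the ball on which $\mathcal{L}$ contracts, so they must coincide; equivalently, this is expansivity of uniformly hyperbolic sets. For part (iii), the first step is to observe that invariant cone fields built from the splitting around $\Lambda$ survive arbitrary $C^{0,1}$-perturbations $g$ of $f$ in a sufficiently small neighborhood $U$, so the maximal $g$-invariant set $\Delta$ contained in an isolating neighborhood of $\Lambda$ is again uniformly hyperbolic; because $\Delta$ is maximal $g$-invariant in $U$, any shadowing $g$-orbit $O(b,x)\subset U$ produced by (i) automatically lies in $\Delta$.

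The main obstacle is the weak $C^{0,1}$-topology on the perturbations. One does not have continuous dependence of the fiber derivatives $\rmd\Psi_b(x)$ on $g$, only of their Lipschitz constants, so the crucial contraction estimate for the remainder $R_n$ must be derived from mean-value inequalities with uniform Lipschitz bounds rather than from a first-order Taylor expansion. For the same reason, the invariance of the cone fields under $g$ has to be recast as an inequality between slope Lipschitz constants, which is robust under perturbation of the Lipschitz constant but not of the derivative. All remaining work, including the uniform choice of $W$ and the small parameters, consists of packaging these Lipschitz estimates so that they hold uniformly over $(b,x)\in U$ and $g\in W$.
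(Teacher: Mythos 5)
The paper gives no proof of this lemma at all: it is quoted from Meyer--Zhang \cite[Lem.~2.11]{MZh}, with only the remark that the almost periodicity of the base homeomorphism $V$ assumed there is not needed. Your sketch --- exploiting that the base sequence of a pseudo-orbit is already an exact $V$-orbit, reducing to a fiberwise nonautonomous shadowing problem, solving it by a Banach fixed-point argument in $\ell^{\infty}(\Z,\R^d)$ with the hyperbolic splitting extended to a neighborhood of $\Lambda$ and with the $C^{0,1}$-perturbation absorbed into a small-Lipschitz remainder, and obtaining (ii) from the contraction and (iii) from robustness of hyperbolicity on the maximal invariant set in an isolating neighborhood --- is precisely the standard argument underlying that reference, so it is essentially the same approach rather than a new one.
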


Note that in \cite{MZh} it is additionally assumed that the homeomorphism $V:B \rightarrow B$ is almost periodic. However, this is not used in the proof of the shadowing lemma.%

\section{A data-rate theorem}\label{sec_datarate}

In this section, we give a new justification that the invariance entropy $h_{\inv}(K,Q)$ is a measure for the smallest bit rate above which the set $Q$ can be stabilized, assuming that $Q$ is the closure of a control set $D$ and $K \subset D$. In the case $K = Q$ this holds for arbitrary controlled invariant sets $Q$ (see \cite{Ka1}). If $K$ is a proper subset of $Q$, however, in general only the inequality $h_{\inv}(K,Q) \leq h_{\inv}(Q,Q)$ holds, implying that $h_{\inv}(K,Q)$ is a lower bound on the critical bit rate.%

The idea employed here is to consider the stabilization objective to keep the system in $K$ on the long run without leaving $Q$, whenever the initial state is in $K$ (instead of keeping it in $Q$ for arbitrary initial values $x_0 \in Q$).%

\begin{definition}
Let $D$ be a control set of system \eqref{eq_cas} with closure $Q = \cl D$ and let $K \subset D$ be a compact set with nonempty interior. For $\tau>0$, a set $\SC \subset \UC$ is called $(\tau,K)^Q$-spanning if for each $x\in K$ there exists $u\in\SC$ with $\varphi(\tau,x,u) \in K$. We write $r_{\inv}(\tau,K)^Q$ for the minimal cardinality of such a set and define%
\begin{eqnarray*}
  h_{\inv}(K)^Q := \lim_{\tau\rightarrow\infty}\frac{1}{\tau}\log r_{\inv}(\tau,K)^Q,\quad h_{\inv}^*(Q) := \sup_{K \subset D}h_{\inv}(K)^Q.%
\end{eqnarray*}
\end{definition}

We observe that by the no-return property of control sets each trajectory $\varphi(t,x,u)$ in the above definition satisfies $\varphi([0,\tau],x,u) \subset D$. The following proposition shows that the definition is meaningful.%

\begin{proposition}\label{prop_iedefcorrect}
Let $D$ be a control set with closure $Q = \cl D$. Assume that the system is locally accessible on $\inner D$. Then, for any sufficiently large compact set $K \subset D$ with nonempty interior there exists $\tau_0 > 0$ so that for all $\tau \geq \tau_0$ a finite $(\tau,K)^Q$-spanning set exists. Moreover, the function $\tau \mapsto \log r_{\inv}(\tau,K)^Q$ is subadditive, and hence the limit in the definition of $h_{\inv}(K)^Q$ exists.%
\end{proposition}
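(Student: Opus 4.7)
The plan is to handle the two assertions separately. For the existence of a finite $(\tau,K)^Q$-spanning set, I will construct a ``loiter'' control at an interior point that keeps trajectories in $\inner K$ for arbitrarily long time, and combine it with finitely many ``arrival'' controls that steer each $x \in K$ into the loiter region. For subadditivity, a direct concatenation of minimal spanning sets suffices, and existence of the limit follows from a standard Fekete-type lemma for subadditive functions.

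Fix $y_0 \in \inner D$. Local accessibility on $\inner D$ together with the control set property yields $\inner D \subset \OC^+(y_0)$ (a standard consequence, see \cite{CKl}); in particular there exist $u^* \in \UC$ and $T^* > 0$ with $\varphi(T^*, y_0, u^*) = y_0$, and by the no-return property the trajectory $\Gamma := \varphi([0,T^*], y_0, u^*)$ lies in $D$. The ``sufficiently large $K$'' hypothesis is used precisely here: I read it as saying $K$ contains an open neighborhood of $\Gamma$ (with $y_0 \in \inner K$). Continuity of the flow in the initial condition then produces an open neighborhood $W \subset \inner K$ of $y_0$ with $\varphi([0,T^*], W, u^*) \subset \inner K$ and $\varphi(T^*, W, u^*) \subset W$; iterating $u^*$ yields a control $u^{**}$ on $[0,\infty)$ with $\varphi(t, W, u^{**}) \subset \inner K$ for all $t \geq 0$. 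Next, for each $x \in K$, approximate controllability $D \subset \cl\OC^+(x)$ together with continuity of $\varphi$ gives $u_x \in \UC$, $t_x > 0$, and an open neighborhood $V_x \ni x$ with $\varphi(t_x, x', u_x) \in W$ for all $x' \in V_x$. A finite subcover $V_{x_1}, \ldots, V_{x_N}$ of $K$ and $\tau_0 := \max_i t_{x_i}$ then yield, for every $\tau \geq \tau_0$, an $N$-element $(\tau, K)^Q$-spanning set consisting of the concatenations $u_{x_i}|_{[0,t_{x_i}]} \cdot u^{**}|_{[0, \tau - t_{x_i}]}$, so $r_{\inv}(\tau,K)^Q \leq N < \infty$.

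Subadditivity is obtained by concatenating spanning sets: given minimal $(\tau_i, K)^Q$-spanning sets $\SC_i$ ($i=1,2$), each $x\in K$ is steered into $K$ in time $\tau_1$ by some $u\in\SC_1$, and the endpoint is then steered into $K$ in additional time $\tau_2$ by some $v\in\SC_2$; the set of all concatenations $uv$ is $(\tau_1+\tau_2, K)^Q$-spanning and has cardinality at most $|\SC_1|\cdot|\SC_2|$, giving $\log r_{\inv}(\tau_1+\tau_2, K)^Q \leq \log r_{\inv}(\tau_1, K)^Q + \log r_{\inv}(\tau_2, K)^Q$. Existence of $\lim_{\tau\to\infty}\tau^{-1}\log r_{\inv}(\tau,K)^Q$ is then immediate from the standard lemma for locally bounded subadditive functions (the limit equals the infimum). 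The principal obstacle is the loop construction: arranging that the whole return trajectory $\Gamma$ lies inside $K$ is what the ``sufficiently large'' hypothesis is there to enforce; if short loops at $y_0$ staying in arbitrarily small neighborhoods are available (a Chow--Rashevskii type statement under the Lie algebra rank condition), the hypothesis would reduce to $\inner K \neq \emptyset$.
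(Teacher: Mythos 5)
There is a genuine gap in the ``loiter'' step. You need that every point steered into $W$ stays in $\inner K$ under the iterated control $u^{**}$ for arbitrarily long times, and for that you invoke $\varphi(T^*,W,u^*)\subset W$. Continuity of the flow does not give this: it only yields, for each neighborhood $W$ of $y_0$, a possibly smaller neighborhood $W'$ with $\varphi(T^*,W',u^*)\subset W$, and without forward invariance the errors compound over successive periods. Worse, such a $W$ may simply not exist: if $y_0$ is a repelling fixed point of the time-$T^*$ map $x\mapsto\varphi(T^*,x,u^*)$ (the typical situation in the hyperbolic setting of this paper), any open set $W\ni y_0$ with $\varphi(T^*,W,u^*)\subset W$ would have to contain all forward iterates of each of its points, which is incompatible with $W\subset\inner K$ for compact $K$. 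So the concatenations $u_{x_i}|_{[0,t_{x_i}]}\,u^{**}|_{[0,\tau-t_{x_i}]}$ are not guaranteed to land in $K$ at time $\tau$, and the finiteness claim is not established.

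The paper's proof avoids this by exploiting that local accessibility on $\inner D$ upgrades approximate controllability to \emph{exact} controllability on $\inner D$: every $x\in K$ is steered exactly to the point $x_0$ on the periodic trajectory (with $t_x$ uniformly bounded by \cite[Lem.~3.2.21]{CKl}), and from there one follows the periodic control $u_0$ exactly, so the trajectory sits on the loop $\varphi(\R,x_0,u_0)\subset\inner K$ for all later times; continuity is then used only once, at the single fixed terminal time $\tau$, to produce the neighborhoods $N_x$ and a finite subcover. To repair your argument you should replace the approximate steering into $W$ plus loitering by exact steering onto the periodic orbit (or otherwise establish a genuinely forward-invariant loiter region, which continuity alone cannot provide). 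A secondary point: your loop $\Gamma$ is only known to lie in $D$, so requiring $K\subset D$ to contain an open neighborhood of $\Gamma$ may be impossible if $\Gamma$ meets $\partial D$; the paper takes the periodic trajectory inside $\inner D$, which is what makes the ``sufficiently large $K$'' hypothesis consistent. Your subadditivity and Fekete-limit part is correct and coincides with the paper's argument.
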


\begin{proof}
By local accessibility, for any $x_0 \in \inner D$ a controlled periodic trajectory $(\varphi(\cdot,x_0,u_0),u_0(\cdot))$ exists such that $\varphi(t,x_0,u_0) \in \inner D$ for all $t\in\R$. Fix such a trajectory for one $x_0 \in \inner D$ and let $K \subset D$ be a compact set with $\varphi(\R,x_0,u_0) \subset \inner K$. From the assumption of local accessibility complete controllability on $\inner D$ follows. Hence, for every $x\in K$ there exists a control function $u_x$ and a time $t_x > 0$ with $\varphi(t_x,x,u_x) = x_0$. Moreover, we can assume that $t_x$ is uniformly bounded by \cite[Lem.~3.2.21]{CKl}. We concatenate $u_x:[0,t_x] \rightarrow U$ with $u_0:\R \rightarrow U$, and denote this new control function again by $u_x$. In this way, we can achieve that $\varphi(\tau,x,u_x) \in K$ for a time $\tau>0$, independent of $x$. Moreover, the time $\tau$ can be chosen arbitrarily large. Since $K$ is compact, every $x\in K$ has a neighborhood $N_x$ so that $\varphi(\tau,y,u_x) \in \inner K$ for all $y\in N_x$. If $m$ is the cardinality of a finite subcover of $\{N_x : x \in K\}$, the corresponding $m$ control functions obviously form a $(\tau,K)^Q$-spanning set. 

To see that the limit in the definition of $h_{\inv}(K)^Q$ exists, observe that $\tau \mapsto r_{\inv}(\tau,K)^Q$ is subadditive. If $\SC_1$ is $(\tau_1,K)^Q$-spanning and $\SC_2$ is $(\tau_2,K)^Q$-spanning, then the set consisting of all possible concatenations of the controls in $\SC_1$ and $\SC_2$ is $(\tau_1+\tau_2,K)^Q$-spanning, implying $r_{\inv}(\tau_1+\tau_2,K)^Q \leq r_{\inv}(\tau_1,K)^Q \cdot r_{\inv}(\tau_2,K)^Q$.%
\end{proof}

The following theorem shows that the formula for $h_{\inv}(K,Q)$ in Theorem \ref{thm_ie_formula} also holds for $h_{\inv}^*(Q)$.%

\begin{theorem}\label{thm_iedr_1}
If $Q$ is a regularly uniformly hyperbolic chain control set, then%
\begin{equation}\label{eq_iestar_formula}
  h_{\inv}^*(Q) = \inf_{(u,x) \in \EC}\limsup_{t\rightarrow\infty}\frac{1}{t}\log J^+\rmd\varphi_{t,u}(x).%
\end{equation}
In fact, the same formula holds for $h_{\inv}(K)^Q$, where $K \subset D$ is an arbitrary compact set with nonempty interior.%
\end{theorem}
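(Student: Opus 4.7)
The plan is to prove $h_{\inv}(K)^Q = h_{\inv}(K,Q)$ for every compact $K \subset D$ with nonempty interior (at least for those covered by Proposition \ref{prop_iedefcorrect}), and then invoke Theorem \ref{thm_ie_formula}. The statement for $h_{\inv}^*(Q)$ then follows by taking $\sup_K$, since the resulting formula does not depend on $K$. The lower bound $h_{\inv}(K,Q) \leq h_{\inv}(K)^Q$ is immediate from the no-return property of control sets: if $x \in K \subset D$ and $\varphi(\tau,x,u) \in K \subset D$, then $\varphi([0,\tau],x,u) \subset D \subset Q$, so every $(\tau,K)^Q$-spanning set is automatically $(\tau,K,Q)$-spanning and hence $r_{\inv}(\tau,K,Q) \leq r_{\inv}(\tau,K)^Q$.

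For the reverse inequality I would start from a minimal $(\tau,K,Q)$-spanning set $\SC$ and append a uniformly bounded collection of ``return'' controls that steer endpoints back into $\inner K$. The compact endpoint set
\[
Y := \bigcup_{u \in \SC} \{\varphi(\tau,x,u) : x \in K,\ \varphi([0,\tau],x,u) \subset Q\} \subset Q
\]
can be handled by finitely many fixed controls $v_1, \ldots, v_N$: by local accessibility and the uniform transition time lemma \cite[Lem.~3.2.21]{CKl} each $y \in Y$ is steered into $\inner K$ in uniformly bounded time, and by continuity of $\varphi$ in the initial condition together with compactness of $Y$, a finite cover suffices. Padding shorter returns with the periodic reference trajectory from Proposition \ref{prop_iedefcorrect} synchronizes all return times to a single $T > 0$. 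The concatenations $\{u v_j : u \in \SC,\ j = 1, \ldots, N\}$ then form a $(\tau+T,K)^Q$-spanning set of cardinality at most $N \cdot r_{\inv}(\tau,K,Q)$; passing to the limit yields $h_{\inv}(K)^Q \leq h_{\inv}(K,Q)$.

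Combining the two bounds with Theorem \ref{thm_ie_formula} gives, for every such $K$,
\[
h_{\inv}(K)^Q = \inf_{(u,x) \in \EC} \limsup_{t \to \infty} \frac{1}{t} \log J^+ \rmd \varphi_{t,u}(x),
\]
which is independent of $K$, so taking $\sup_{K \subset D}$ yields the formula for $h_{\inv}^*(Q)$. The main obstacle is the synchronization step in the upper bound: ensuring that a \emph{single} return time $T$ works for every endpoint in $Y$, which forces one to combine the uniform transition time lemma with a careful padding by the periodic reference trajectory. A subsidiary subtlety is that $Y$ may meet $\partial D$, where only approximate controllability is available; this is circumvented by first perturbing endpoints slightly into $\inner D$ using continuous dependence of $\varphi$ on the initial condition, and then invoking the controllability argument on a compact subset of $\inner D$.
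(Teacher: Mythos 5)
Your lower bound and the overall skeleton (append uniformly time-bounded return controls and pass to the limit) match the paper, but the upper bound has a genuine gap at exactly the point you flag as a ``subsidiary subtlety''. You start from an \emph{arbitrary} minimal $(\tau,K,Q)$-spanning set $\SC$, and its endpoint set $Y$ need only lie in $Q=\cl D$. Points of $\cl D\setminus\inner D$ (in particular of $\cl D\setminus D$) need not be steerable back into $K$ at all: approximate controllability is a property of points of $D$, and from a boundary point all admissible trajectories may leave $\cl D$ forever. Moreover, the uniform transition-time lemma \cite[Lem.~3.2.21]{CKl} you invoke gives a uniform bound only for steering between points of a compact subset of $\inner D$; it says nothing about initial points in $Y\cap\partial D$. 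Your proposed fix --- ``perturbing endpoints slightly into $\inner D$ using continuous dependence on the initial condition'' --- is not an available operation: for a given $x\in K$ served by $u\in\SC$, the state at time $\tau$ is $\varphi(\tau,x,u)$ and cannot be moved; perturbing $x$ changes which point of $K$ you are serving, and perturbing $u$ neither guarantees the trajectory stays in $Q$ nor that the new endpoint lands in $\inner D$. So the inequality $h_{\inv}(K)^Q\leq h_{\inv}(K,Q)$ is not established by this argument.

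The paper circumvents precisely this obstruction by \emph{not} working with arbitrary spanning sets. It goes back into the proof of Theorem \ref{thm_ie_formula} (i.e.\ \cite[Prop.~5]{DSK}): there, for each periodic $(u,x)\in\inner\UC\tm\inner Q$, one constructs $(\tau,K,Q)$-spanning sets $\SC_0(\tau)$ whose trajectories all end in an arbitrarily small compact neighborhood $N$ of the periodic trajectory, hence well inside $\inner D$. For such $N$ the uniform transition-time lemma applies, so finitely many return controls with a single bound $T$ upgrade $\SC_0(\tau)$ to a $(\tau+T,K)^Q$-spanning set of cardinality $m\cdot|\SC_0(\tau)|$, giving $h_{\inv}(K)^Q\leq\frac{1}{\tau}\log J^+\rmd\varphi_{\tau,u}(x)$; taking the infimum over periodic $(u,x)$ and using that this infimum equals the right-hand side of \eqref{eq_iestar_formula} (again from the proof of Theorem \ref{thm_ie_formula}) finishes the argument. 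In short, the conclusion $h_{\inv}(K)^Q=h_{\inv}(K,Q)$ does come out in the end, but it is obtained through the hyperbolic construction of the specific spanning sets, not by post-processing an arbitrary minimal one; to repair your proof you would need to replace your set $\SC$ by these $\SC_0(\tau)$ (or otherwise guarantee that endpoints land in a fixed compact subset of $\inner D$).
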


\begin{proof}
First observe that $h_{\inv}^*(Q)$ is well-defined, because there exists a control set $D$ with $\cl D = Q$ by Theorem \ref{thm_ie_formula}. Because of the no-return property of control sets, it is clear that a $(\tau,K)^Q$-spanning set is $(\tau,K,Q)$-spanning, hence Theorem \ref{thm_ie_formula} implies the inequality%
\begin{equation*}
  h_{\inv}(K)^Q \geq \inf_{(u,x) \in \EC}\limsup_{t\rightarrow\infty}\frac{1}{t}\log J^+\rmd\varphi_{t,u}(x),%
\end{equation*}
for all $K \subset D$ with nonempty interior (and hence positive volume). Consequently, the same inequality holds for $h_{\inv}^*(Q)$. To show the converse inequality, we have to recall some arguments used in the proof of Theorem \ref{thm_ie_formula}. The upper estimate on $h_{\inv}(K,Q)$ is based on the inequality%
\begin{equation}\label{eq_ie_perest}
  h_{\inv}(K,Q) \leq \lim_{t\rightarrow\infty}\frac{1}{t}\log J^+\rmd\varphi_{t,u}(x) = \frac{1}{\tau}\log J^+\rmd\varphi_{\tau,u}(x)%
\end{equation}
for $\tau$-periodic trajectories corresponding to $(u,x) \in \inner \UC \tm \inner Q$, see \cite[Prop.~5]{DSK}.%

We claim that the same estimate holds for $h_{\inv}(K)^Q$. To see this, we recall that the $(\tau,K,Q)$-spanning sets constructed to prove \eqref{eq_ie_perest}, for sufficiently large $\tau$, lead to trajectories that end in a neighborhood $N$ of the periodic trajectory $\varphi(\cdot,x,u)$. This neighborhood can in fact be chosen arbitrarily small and compact. By \cite[Lem.~3.2.21]{CKl} there exists $T>0$ so that the minimal time to steer from any $x\in N$ to any $y\in K$ is bounded by $T$. Concatenating the control functions in a $(\tau,K,Q)$-spanning set $\SC_0(\tau)$ producing trajectories ending in $N$ with control functions steering those endpoints back to $K$ leads to $(\tau+T,K)^Q$-spanning sets of some cardinality $m \cdot |\SC_0(\tau)|$, where $m$ is a fixed number, independent of $\tau$. (Here we use the same idea as in the proof of Proposition \ref{prop_iedefcorrect} to find a uniform $T$.) This yields%
\begin{equation*}
  h_{\inv}(K)^Q \leq \lim_{\tau\rightarrow\infty} \frac{1}{\tau + T} \log ( m \cdot |\SC_0(\tau)| ) = \lim_{\tau\rightarrow\infty} \frac{1}{\tau} \log |\SC_0(\tau)|,%
\end{equation*}
where the sets $\SC_0(\tau)$ can be chosen so that the latter limit is arbitrarily close to $(1/\tau)\log J^+\rmd\varphi_{\tau,u}(x)$. This proves the claim and shows that%
\begin{equation*}
  h_{\inv}(K)^Q \leq \inf_{(u,x)} \lim_{t\rightarrow\infty}\frac{1}{t}\log J^+\rmd\varphi_{t,u}(x),%
\end{equation*}
the infimum taken over all periodic $(u,x) \in \inner \UC \tm \inner D$. The proof of Theorem \ref{thm_ie_formula} in fact shows that the right-hand side of this inequality is equal to the right-hand side of \eqref{eq_iestar_formula}, completing the proof.%
\end{proof}

Suppose that a sensor measures the states of the system at discrete sampling times $\tau_k = k \tau$ for some $\tau>0$. A coder receiving these measurements generates at each sampling time $\tau_k$ a symbol $s_k$ from a finite coding alphabet $S_k$ of time-varying size. This symbol is transmitted through a digital noiseless channel to a controller. The controller, upon receiving $s_k$, generates an open-loop control $u_k$ on $[0,\tau]$ used as the control input in the time interval $[\tau_k,\tau_{k+1}]$. The aim of this coding and control device is to render a set $K \subset Q$ invariant in the sense of the following definition (here $x_t$ stands for the state at time $t$, resulting from the initial state $x_0$ and the controller actions in the time interval $[0,t]$).%

\begin{figure}[hp]
	\begin{center}
		\includegraphics[width=8.20cm,height=2.50cm,angle=0]{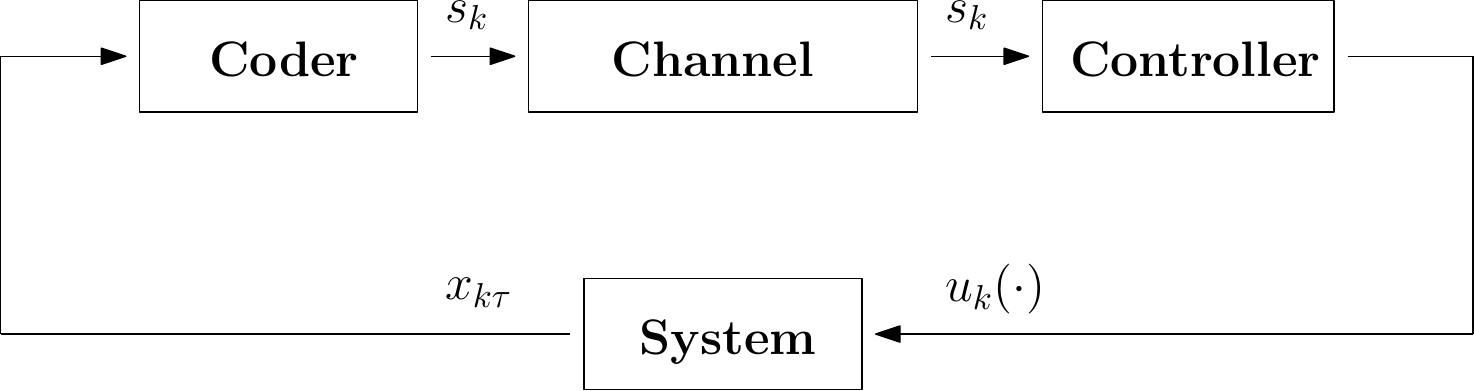}
	\end{center}
\caption{Control over a digital channel}%
\end{figure}

The asymptotic average bit rate of the channel is defined by%
\begin{equation*}
  R := \liminf_{k\rightarrow\infty}\frac{1}{k\tau}\sum_{j=0}^{k-1}\log_2|S_j|.%
\end{equation*}

\begin{definition}
Let $Q \subset M$ be some nonempty set and $K \subset Q$ compact. We say that $K$ is rendered $Q$-invariant by a coder-controller pair if $x_0 \in K$ implies%
\begin{enumerate}
\item[(i)] $x_t \in Q$ for all $t \geq 0$ and%
\item[(ii)] $x_{k\tau} \in K$ for all $k\in\Z_+$, where $\tau$ is the sampling time.%
\end{enumerate}
\end{definition}

\begin{theorem}
Let $D$ be a control set with nonempty interior and put $Q = \cl D$. Assume that local accessibility holds on $\inner D$. Then the smallest bit rate above which a coder-controller pair can be designed that renders a (sufficiently large) compact subset $K \subset D$ $Q$-invariant is given by $\log_2(\rme)h_{\inv}^*(Q)$.%
\end{theorem}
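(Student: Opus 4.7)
The plan is to establish matching upper and lower bounds: a converse showing $R \geq \log_2(\rme)\, h_{\inv}(K)^Q$ for every coder-controller rendering $K$ $Q$-invariant, and an achievability construction producing coder-controllers of rate at most $\log_2(\rme)\, h_{\inv}(K)^Q + \epsilon$ for each $\epsilon>0$ and each sufficiently large compact $K \subset D$. Taking the supremum over such $K$ then yields the stated critical rate $\log_2(\rme)\, h_{\inv}^*(Q)$.

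For the converse, I would fix a sufficiently large $K$ and a coder-controller with sampling time $\tau$ rendering $K$ $Q$-invariant. Because each $u_k$ on $[k\tau,(k+1)\tau]$ is a deterministic function of the symbol history $(s_0,\ldots,s_{k-1})$, there are at most $\prod_{j=0}^{k-1}|S_j|$ distinct control functions on $[0,k\tau]$ that this scheme can generate. Condition (ii) of $Q$-invariance asserts that for each $x_0 \in K$ the generated control steers $x_0$ back into $K$ at time $k\tau$, so this collection is $(k\tau,K)^Q$-spanning and therefore $r_{\inv}(k\tau,K)^Q \leq \prod_{j=0}^{k-1}|S_j|$. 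Taking the natural logarithm, dividing by $k\tau$, converting via $\log = (\ln 2)\log_2$, and passing to $\liminf_{k\to\infty}$ (the left-hand side converges to $h_{\inv}(K)^Q$ by the subadditivity established in Proposition \ref{prop_iedefcorrect}) gives $h_{\inv}(K)^Q \leq (\ln 2)\,R$, i.e.\ $R \geq \log_2(\rme)\, h_{\inv}(K)^Q$.

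For the achievability part, fix $\epsilon > 0$ and a sufficiently large compact $K \subset D$. By Proposition \ref{prop_iedefcorrect} I can choose $\tau > 0$ with $N := r_{\inv}(\tau,K)^Q < \infty$ and $\tau^{-1}\log N \leq h_{\inv}(K)^Q + \epsilon$. Let $\{u^{(1)},\ldots,u^{(N)}\}$ be a minimal $(\tau,K)^Q$-spanning set and set $K_i := \{x \in K : \varphi(\tau,x,u^{(i)}) \in K\}$; by continuity of $\varphi(\tau,\cdot,u^{(i)})$ each $K_i$ is closed, and the spanning property yields $K = K_1 \cup \cdots \cup K_N$. Disjointifying produces a Borel partition $(\tilde K_i)_{i=1}^N$ of $K$. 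The coder-controller operates as follows: at each sampling instant $k\tau$, the coder transmits the index $i$ with $x_{k\tau} \in \tilde K_i$, using an alphabet of constant size $|S_k| = N$, and the controller then applies $u^{(i)}$ on $[k\tau,(k+1)\tau]$. Condition (ii) holds by construction, while condition (i) follows from the no-return property of the control set $D$: since both $x_{k\tau}$ and $x_{(k+1)\tau}$ lie in $K \subset D$, the trajectory joining them lies in $D \subset Q$. The resulting rate is $R = \tau^{-1}\log_2 N = \log_2(\rme)\,\tau^{-1}\log N \leq \log_2(\rme)(h_{\inv}(K)^Q + \epsilon)$, as desired.

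The main conceptual point, rather than a technical obstacle, is that the no-return property promotes sampling-time containment in $K$ to continuous-time containment in $Q$; this is precisely why $(\tau,K)^Q$-spanning sets, rather than the ostensibly stronger $(\tau,K,Q)$-spanning sets defining $h_{\inv}(K,Q)$, form the natural combinatorial object for the data-rate problem, and correspondingly why $h_{\inv}^*(Q)$ (not $h_{\inv}(Q,Q)$) appears in the answer. Measurability of the coder's partition is automatic from continuity of $\varphi$ in the initial state, and the passage from the per-$K$ critical rate to $\log_2(\rme)\, h_{\inv}^*(Q)$ is just the definition $h_{\inv}^*(Q) = \sup_{K \subset D} h_{\inv}(K)^Q$.
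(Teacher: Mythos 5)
Your proposal is correct and follows essentially the same route as the paper: the converse counts the controls generated by the coder-controller over $[0,k\tau]$ to obtain a $(k\tau,K)^Q$-spanning set of cardinality at most $\prod_j|S_j|$, and achievability builds a coder-controller from a minimal $(\tau,K)^Q$-spanning set via disjointified coding regions $K_i$, with the no-return property supplying continuous-time containment in $Q$. Your version is, if anything, slightly more explicit about the $\log_2(\rme)$ conversion and the choice of $\tau$ via an $\epsilon$-argument, but these are presentational differences only.
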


\begin{proof}
Let $\SC$ be a minimal $(\tau,K)^Q$-spanning set for a sufficiently large $\tau>0$. Then, by Proposition \ref{prop_iedefcorrect}, $\SC$ is finite, and hence we can write $\SC = \{u_1,\ldots,u_m\}$ for some $m\in\N$. We define coding regions%
\begin{equation*}
  K_i := \left\{ x\in K\ :\ \varphi(\tau,x,u_i) \in K \right\},\quad i = 1,\ldots,m.%
\end{equation*}
We may assume that the coding regions are disjoint by cutting away overlaps. Now consider the sampling times $\tau_k := k\tau$, $k\in\Z_+$. At time $\tau_k$ let the coder send the symbol $i$ through the channel if and only if $x_{\tau_k} \in K_i$. Let the controller generate the control function $u_i:[0,\tau] \rightarrow U$ upon receiving the symbol $i$. This coding and control scheme certainly guarantees invariance in the desired sense and the asymptotic bit rate is%
\begin{equation*}
  R = \frac{1}{\tau} \log m = \frac{1}{\tau}\log r_{\inv}(\tau,K)^Q.%
\end{equation*}
Since $\tau$ can be chosen arbitrarily, this implies that the infimal bit rate $R_0$ satisfies%
\begin{equation*}
  R_0 \leq h_{\inv}(K)^Q.%
\end{equation*}
Conversely, assume that the control objective is achieved via a channel of bit rate $R$ with a sampling time $\tau>0$. The set $\SC_k$ of all controls generated by the controller in the time interval $[0,k\tau]$, when $x_0$ ranges through $K$, is certainly a $(k\tau,K)^Q$-spanning set. The number of these controls is bounded by $\prod_{j=0}^{k-1} |S_j|$. Hence,%
\begin{equation*}
  \frac{1}{k\tau}\log r_{\inv}(k\tau,K)^Q \leq \log_2(\rme)^{-1}\frac{1}{k\tau}\sum_{j=0}^{k-1}\log_2|S_j|,%
\end{equation*}
implying $\log_2(\rme)\cdot h_{\inv}(K)^Q \leq R_0$ by letting $k\rightarrow\infty$ and $R \rightarrow R_0$.%
\end{proof}

\section{Parameter dependence of spectra}\label{sec_spectra}

Consider the parameter-dependent control-affine system%
\begin{equation}\label{eq_sigmaalpha}
  \Sigma^{\alpha}:\ \dot{x}(t) = f_0(\alpha,x(t)) + \sum_{i=1}^m u_i(t)f_i(\alpha,x(t)),\quad u\in\UC,\ \alpha \in A,%
\end{equation}
where $f_0,f_1,\ldots,f_m:A \tm M \rightarrow TM$ are parameter-dependent vector fields on a Riemannian manifold $M$ with parameter space $A \subset \R^k$, which are $C^{\infty}$ in $(\alpha,x)$. As in Subsect.~\ref{subsec_cas}, we assume that $\UC = L^{\infty}(\R,U)$ for a compact and convex set $U\subset\R^m$ and that all solutions are defined on $\R$. Objects related to $\Sigma^{\alpha}$ are equipped with the superscript $\alpha$, e.g., $\varphi^{\alpha}$ is the transition map of $\Sigma^{\alpha}$.%

In the rest of the paper, we will assume for simplicity that both the parameter space $A$ and the state space $M$ are compact. This makes several arguments simpler, but causes no loss of generality, since we will only consider the action of the control systems $\Sigma^{\alpha}$ on a specified compact subset of $M$ and small neighborhoods of some specified parameter $\alpha^0$.%

In this setting, we have the following result, which is a corollary of \cite[Thm.~2]{CDu} and \cite[Thm.~3.2.28]{CKl}.%

\begin{theorem}\label{thm1}
Let $D^{\alpha^0}$ be a control set of $\Sigma^{\alpha^0}$ with nonempty interior for some $\alpha^0 \in \inner A$. Assume that the following conditions are satisfied:%
\begin{enumerate}
\item[(a)] $\Sigma^{\alpha^0}$ satisfies the Lie algebra rank condition on $\cl D^{\alpha^0}$.%
\item[(b)] The systems $\Sigma^{\alpha}$ are locally accessible for all $\alpha$ in a neighborhood of $\alpha^0$.%
\item[(c)] $\cl D^{\alpha^0}$ coincides with the chain control set $E^{\alpha^0}$ containing $D^{\alpha^0}$.%
\end{enumerate}
Then there exists a neighborhood $W \subset A$ of $\alpha^0$ such that the following hold:%
\begin{enumerate}
\item[(i)] For every $\alpha \in W$ there exists a control set $D^{\alpha}$ of $\Sigma^{\alpha}$ with $\inner D^{\alpha} \cap \inner D^{\alpha^0} \neq \emptyset$.%
\item[(ii)] The map $\alpha \mapsto \cl D^{\alpha}$ from $W$ to the space of nonempty subsets of $M$ is continuous in the Hausdorff metric at $\alpha = \alpha^0$.%
\end{enumerate}
\end{theorem}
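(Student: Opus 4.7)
The plan is to derive Theorem~\ref{thm1} as a direct consequence of the two cited results, with hypothesis~(c) acting as the bridge between them. First, I would apply \cite[Thm.~3.2.28]{CKl}, which, for control-affine systems depending smoothly on parameters and satisfying local accessibility, produces a neighborhood $W_1$ of $\alpha^0$ such that for every $\alpha \in W_1$ there exists a control set $D^{\alpha}$ of $\Sigma^{\alpha}$ whose interior meets $\inner D^{\alpha^0}$, and moreover yields the lower semicontinuity inclusion $\cl D^{\alpha^0} \subset \liminf_{\alpha \to \alpha^0} \cl D^{\alpha}$. This already settles (i) and the lower semicontinuity half of (ii). Hypothesis~(a), together with the $C^\infty$ dependence of the $f_i$ on $\alpha$, ensures that the Lie algebra rank condition persists on an open subset of $\inner D^{\alpha^0}$ for all nearby $\alpha$ (since the Lie-bracket rank is lower semicontinuous), which is the underlying input required by the cited theorem; (b) then supplies local accessibility on a whole neighborhood of $\alpha^0$.

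Next, I would invoke \cite[Thm.~2]{CDu}, which asserts that chain control sets of parametrized control-affine systems vary upper semicontinuously in the Hausdorff sense, so that $\limsup_{\alpha \to \alpha^0} E^{\alpha} \subset E^{\alpha^0}$; here $E^{\alpha}$ denotes the (unique) chain control set of $\Sigma^{\alpha}$ containing $D^{\alpha}$, whose existence and uniqueness are recalled in Subsect.~\ref{subsec_cas}. Since $D^{\alpha} \subset E^{\alpha}$ for every $\alpha \in W_1$, and hence $\cl D^{\alpha} \subset E^{\alpha}$, I can combine this inclusion with the upper semicontinuity of $E^{\alpha}$ and the crucial hypothesis~(c) to obtain
\[
  \limsup_{\alpha \to \alpha^0} \cl D^{\alpha} \;\subset\; \limsup_{\alpha \to \alpha^0} E^{\alpha} \;\subset\; E^{\alpha^0} \;=\; \cl D^{\alpha^0},
\]
which is exactly upper semicontinuity of $\alpha \mapsto \cl D^{\alpha}$ at $\alpha^0$. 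Shrinking $W_1$ if necessary to the intersection with the neighborhood on which \cite[Thm.~2]{CDu} applies gives the neighborhood $W$ of the statement, and the characterization of Hausdorff continuity in terms of upper and lower semicontinuity (recalled in Section~\ref{sec_prelim}) combines the two half-results into assertion~(ii).

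I expect the main obstacle to be essentially bookkeeping rather than new analysis: the heavy lifting resides in \cite{CDu,CKl}, and the genuine content of the theorem is the observation that hypothesis~(c) is precisely what allows upper semicontinuity of chain control sets to be transferred to closures of control sets. Without (c), nearby control sets could a priori spread out to fill regions of $E^{\alpha^0}$ strictly larger than $\cl D^{\alpha^0}$ while still sitting inside a small perturbation of $E^{\alpha^0}$, so dropping (c) would require a substantively different strategy. The only minor technicality to verify is that the neighborhood $W$ on which both cited theorems apply simultaneously can be chosen open and nonempty; this is immediate from the compactness of $A$ and $M$ together with the standing \emph{smoothness} assumption on the parametric vector fields.
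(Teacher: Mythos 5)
Your proposal is correct and follows essentially the route the paper intends: the paper gives no separate proof but presents the theorem as a corollary of \cite[Thm.~2]{CDu} and \cite[Thm.~3.2.28]{CKl}, and your reconstruction—existence and lower semicontinuity of the nearby control sets from the Colonius--Kliemann result, upper semicontinuity of the chain control sets from the Colonius--Du side, and hypothesis (c) together with $\cl D^{\alpha}\subset E^{\alpha}$ to convert the latter into upper semicontinuity of $\alpha\mapsto\cl D^{\alpha}$—is exactly how these ingredients combine. The only quibble is bookkeeping of attributions (upper semicontinuity of chain control sets is also available as \cite[Cor.~3.4.7]{CKl}, which the paper uses later), which does not affect the argument.
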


The above result can in particular be applied in the case when $\Sigma^{\alpha^0}$ has a uniformly hyperbolic chain control set $E^{\alpha^0}$ with nonempty interior and is locally accessible, because in this case $E^{\alpha^0} = \cl D^{\alpha^0}$ for a control set $D^{\alpha^0}$, as stated in the next theorem whose proof can be found in \cite{CDu}.%

\begin{theorem}\label{thm_closure}
Assume that $E$ is a uniformly hyperbolic chain control set with $\inner E \neq \emptyset$ such that local accessibility holds on $E$. Then $E$ is the closure of a control set $D$.%
\end{theorem}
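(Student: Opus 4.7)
The strategy is to upgrade the chain controllability of $E$ to genuine approximate controllability by exploiting the shadowing property of the uniformly hyperbolic set $\EC$, and then to recognize $E$ as the closure of a control set via the standard maximality argument.

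First I would set up shadowing for the control flow by passing to the time-$\tau$ map $\Phi_\tau$ on $\UC\tm M$ for a small fixed $\tau>0$; this is a skew-product homeomorphism with base $\theta_\tau$ and $C^1$-fibers $x\mapsto\varphi(\tau,x,u)$, and the continuous-time uniform hyperbolicity on $\EC$ yields a discrete-time uniformly hyperbolic splitting satisfying the hypotheses of Lemma \ref{lem_shadowing}. Any $(\ep,T)$-chain in $E$ can be translated, after rounding the jump times to multiples of $\tau$ and concatenating the underlying control functions, into a $\delta$-pseudo-orbit of $\Phi_\tau$ lying in a neighborhood of $\EC$, with $\delta\to0$ as $\ep\to0$. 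Hence every such chain is shadowed by a true $\Phi_\tau$-orbit whose state-space projection is an actual trajectory of \eqref{eq_cas} that tracks the chain.

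Next I would fix $x_0\in\inner E$ and use a closed chain through $x_0$ together with shadowing to produce a genuine trajectory returning arbitrarily close to $x_0$; combined with local accessibility on $E$, standard control-theoretic arguments (see \cite{CKl}) place $x_0$ in the interior of a control set $D\subset E$. By the maximality of chain control sets, $\cl D\subset E$. For the reverse inclusion, I would take an arbitrary $y\in E$ and build a periodic $(\ep,T)$-chain in $E$ passing close to both $x_0$ and $y$ by concatenating a chain from $x_0$ to $y$ with one from $y$ to $x_0$. Shadowing this yields a periodic $\Phi$-orbit whose projection is a closed trajectory starting $\ep$-close to $x_0$, passing $\ep$-close to $y$, and returning $\ep$-close to $x_0$. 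For $\ep$ small enough the starting point lies in $\inner D$, so by the no-return property of control sets the entire closed trajectory lies in $\cl D$, placing a point within $\ep$ of $y$ in $\cl D$. Letting $\ep\to0$ gives $y\in\cl D$, and hence $E=\cl D$.

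The main obstacle is that Lemma \ref{lem_shadowing} does not permit prescribing the starting point of the shadowing orbit, only its closeness to the pseudo-orbit; the argument therefore has to route the controllability claim through the no-return property applied to the full periodic trajectory rather than through a direct forward reachability statement from $x_0$. Some care is also needed in matching the chain times $t_i\geq T$ to multiples of $\tau$ when discretizing, but this mismatch can be absorbed into the pseudo-orbit error because the fiber maps depend continuously on $(t,u,x)$.
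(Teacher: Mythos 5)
The paper does not actually prove this statement -- it is quoted from \cite{CDu} -- and your shadowing-plus-accessibility strategy is indeed in the spirit of that proof and of the machinery the paper itself uses in Theorem \ref{thm_str} and Proposition \ref{prop_robustness}. However, as written there are two genuine gaps. The first is the claim that any $(\ep,T)$-chain ``in $E$'' can be turned, after rounding times and concatenating controls, into a $\delta$-pseudo-orbit of $\Phi_\tau$ lying in a neighborhood of $\EC$ with $\delta\to 0$ as $\ep\to 0$. Chain controllability as defined only constrains the jump points (in fact only the endpoints $x_0,x_n$ are required to lie in $E$), so the intermediate states and the connecting trajectory arcs may leave any prescribed neighborhood of $E$; more seriously, closeness to $\EC$ is a condition on the pair $(u,x)$, and the control coordinates $\theta_{k\tau}u$ of your lifted sequence need not be weak$^*$-close to controls whose trajectories stay in $E$ for all time. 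Also, rounding the jump times $t_i$ to multiples of $\tau$ produces an error of order $\tau$, not of order $\ep$, so it is not ``absorbed'' as $\ep\to0$. The repair is to work upstairs from the start: $\EC$ is the maximal invariant chain transitive set of $\Phi$, hence chain transitive for the time-$\tau$ map (cf.\ \cite[Prop.~3.1.11]{CK2}, exactly as in Proposition \ref{prop_robustness}), so one can take chains with all jump points in $\EC$ and times equal to multiples of $\tau$, and then concatenate the controls so that the base coordinate is an exact $\theta_\tau$-orbit (the fibre maps depend only on the restriction of the control); even then the closeness of the resulting pairs to $\EC$ in the $\UC$-coordinate requires an estimate. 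Without some version of this, Lemma \ref{lem_shadowing} simply does not apply.

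The second gap is the step ``a genuine trajectory returning arbitrarily close to $x_0$, combined with local accessibility, places $x_0$ in the interior of a control set.'' This is not a citable standard fact: trajectories starting near $x_0$ and returning near $x_0$ give no exact reachability and essentially only re-express chain recurrence. The missing idea is the correction step that converts $\ep$-closeness into exact controllability: since $x_0\in\inner E$, local accessibility yields, for small $T$, nonempty open sets $V^+\subset\OC^+_{\leq T}(x_0)\cap\inner E$ and $V^-\subset\OC^-_{\leq T}(x_0)\cap\inner E$; one must arrange the chain (and hence the shadowing trajectory, for $\ep$ small) to start inside $V^+$ and to end inside $V^-$, and then concatenate with exact controls from $x_0$ into $V^+$ and from $V^-$ back to $x_0$. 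This gives $x_0\in\inner\OC^+(x_0)$ and mutual approximate controllability on a neighborhood of $x_0$, which is what actually produces a control set $D$ with $x_0\in\inner D$ -- note that your sketch never uses the hypothesis $\inner E\neq\emptyset$, which is essential here. Two smaller points: the periodicity of the shadowing orbit in your second inclusion needs the uniqueness clause (ii) of Lemma \ref{lem_shadowing} together with a periodically extended concatenated control, so that the base point is $\theta_\tau$-periodic; and $\cl D\subset E$ does not follow from ``maximality of chain control sets'' alone -- one needs an argument (as in the inclusion of control sets in chain control sets, which uses accessibility on $\inner D$) to ensure $D$ does not protrude from $E$.
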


The next corollary immediately follows.%

\begin{corollary}\label{cor1}
Let $E^{\alpha^0}$ be a regularly uniformly hyperbolic chain control set of $\Sigma^{\alpha^0}$ for some $\alpha^0 \in \inner A$, and denote by $D^{\alpha^0}$ the control set with $\cl D^{\alpha^0} = E^{\alpha^0}$. Assume that the systems $\Sigma^{\alpha}$ are locally accessible for all $\alpha$ in a neighborhood of $\alpha^0$. Then there exists a neighborhood $W \subset A$ of $\alpha^0$ such that the following hold:%
\begin{enumerate}
\item[(i)] For every $\alpha \in W$ there exists a control set $D^{\alpha}$ of $\Sigma^{\alpha}$ with $\inner D^{\alpha} \cap \inner D^{\alpha^0} \neq \emptyset$.%
\item[(ii)] The map $\alpha \mapsto \cl D^{\alpha}$ from $W$ to the space of nonempty subsets of $M$ is continuous in the Hausdorff metric at $\alpha = \alpha^0$.%
\end{enumerate}
\end{corollary}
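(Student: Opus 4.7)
The plan is to recognize that the corollary is nearly a direct consequence of Theorem \ref{thm1} once one exhibits the underlying control set $D^{\alpha^0}$ and verifies that the hypotheses of Theorem \ref{thm1} are implied by the stronger assumption of regular uniform hyperbolicity together with local accessibility near $\alpha^0$. So the strategy splits into three short steps: produce $D^{\alpha^0}$, check the three hypotheses (a)--(c) of Theorem \ref{thm1}, and invoke it.

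First, I would produce the control set $D^{\alpha^0}$ with $\cl D^{\alpha^0} = E^{\alpha^0}$. By the definition of regular uniform hyperbolicity, $E^{\alpha^0}$ is uniformly hyperbolic with nonempty interior. The standing hypothesis of the corollary supplies local accessibility of $\Sigma^{\alpha^0}$, in particular on $E^{\alpha^0}$. Theorem \ref{thm_closure} then immediately yields a control set $D^{\alpha^0}$ whose closure is $E^{\alpha^0}$, which simultaneously furnishes hypothesis (c) of Theorem \ref{thm1} (uniqueness of the containing chain control set is automatic under local accessibility, since $\inner D^{\alpha^0} \neq \emptyset$; see the discussion in Subsection \ref{subsec_cas}).

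Next, I would verify hypotheses (a) and (b) of Theorem \ref{thm1}. Hypothesis (b), the local accessibility of $\Sigma^{\alpha}$ for all $\alpha$ in a neighborhood of $\alpha^0$, is literally one of the assumptions of the corollary, so nothing has to be done. Hypothesis (a), the Lie algebra rank condition on $\cl D^{\alpha^0} = E^{\alpha^0}$, is the only point that requires thought, since the definition of regular uniform hyperbolicity only gives the LARC on $\inner E^{\alpha^0}$. However, Theorem \ref{thm1} is only being invoked through the machinery of \cite{CDu,CKl}, where the LARC is used exactly to guarantee the local accessibility of $\Sigma^{\alpha^0}$ near $D^{\alpha^0}$ and the structural stability of the control set; in our setting local accessibility is directly postulated, so combining the LARC on the dense open subset $\inner E^{\alpha^0}$ with the postulated local accessibility on all of $E^{\alpha^0}$ supplies everything needed.

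With (a), (b) and (c) in hand, Theorem \ref{thm1} applies and produces precisely the neighborhood $W \subset A$ of $\alpha^0$ together with the family $\{D^{\alpha}\}_{\alpha \in W}$ satisfying conclusions (i) and (ii) of the corollary. The only genuinely nontrivial step is the existence of $D^{\alpha^0}$, which is provided by Theorem \ref{thm_closure}; everything else is a matter of unpacking definitions and quoting results.
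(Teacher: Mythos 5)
Your proposal follows exactly the route the paper intends: the paper declares the corollary to follow immediately from Theorem \ref{thm_closure} (which produces the control set $D^{\alpha^0}$ with $\cl D^{\alpha^0}=E^{\alpha^0}$, hence hypothesis (c)) combined with Theorem \ref{thm1}, and this is precisely your three-step argument. Your additional discussion of hypothesis (a) --- the Lie algebra rank condition being assumed only on $\inner E^{\alpha^0}$ while Theorem \ref{thm1} formally asks for it on $\cl D^{\alpha^0}$ --- addresses a point the paper itself passes over silently, so it is a sensible clarification rather than a departure from the paper's proof.
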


\subsection{The Morse spectrum over a chain control set}

In this subsection, we prove that the Morse spectrum of an additive cocycle over the lift of a chain control set, considered as a set-valued map on the parameter space $A$, is upper semicontinuous at $\alpha^0 \in A$, provided that also the corresponding chain control sets depend upper semicontinuously on $\alpha$ at $\alpha^0$.% 

\begin{lemma}\label{lem_usclifts}
Let $E^{\alpha}$ be a family of compact chain control sets for $\Sigma^{\alpha}$ such that the set-valued map $\alpha \mapsto E^{\alpha}$ is upper semicontinuous at $\alpha^0$. Then also the set-valued map $\alpha \mapsto \EC^{\alpha}$ is upper semicontinuous at $\alpha_0$.%
\end{lemma}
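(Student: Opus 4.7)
The plan is to verify upper semicontinuity of $\alpha \mapsto \EC^{\alpha}$ via the sequential characterization recalled in the preliminaries: since $\UC \tm M$ is compact (the weak$^*$-topology makes $\UC$ compact and $M$ is compact by standing assumption) and each $\EC^{\alpha}$ is closed, it suffices to prove the inclusion
\begin{equation*}
  \EC^{\alpha^0} \supset \limsup_{\alpha \rightarrow \alpha^0} \EC^{\alpha},
\end{equation*}
that is: whenever $\alpha_k \rightarrow \alpha^0$ in $A$ and $(u_k,x_k) \in \EC^{\alpha_k}$ converges to some $(u,x)$ in $\UC \tm M$, the limit $(u,x)$ belongs to $\EC^{\alpha^0}$.

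First I would invoke the continuous dependence of the transition maps on the parameter: under the standing smoothness hypotheses on the vector fields $f_i(\alpha,\cdot)$, standard results on parameter-dependent ODEs together with the weak$^*$-topology on $\UC$ give that the map $(\alpha,t,x,u) \mapsto \varphi^{\alpha}(t,x,u)$ is jointly continuous on $A \tm \R \tm M \tm \UC$. Hence for each fixed $t\in\R$,
\begin{equation*}
  \varphi^{\alpha_k}(t,x_k,u_k) \rightarrow \varphi^{\alpha^0}(t,x,u) \mbox{\quad as\ } k \rightarrow \infty.
\end{equation*}

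Next, because $(u_k,x_k) \in \EC^{\alpha_k}$, by definition of the lift we have $\varphi^{\alpha_k}(t,x_k,u_k) \in E^{\alpha_k}$ for every $t\in\R$. Now I would apply the hypothesis that $\alpha \mapsto E^{\alpha}$ is upper semicontinuous at $\alpha^0$: for any $\ep > 0$, the set $E^{\alpha_k}$ is contained in the $\ep$-neighborhood of $E^{\alpha^0}$ for all sufficiently large $k$. Combined with the fact that $E^{\alpha^0}$ is closed, this forces the limit $\varphi^{\alpha^0}(t,x,u)$ to lie in $E^{\alpha^0}$.

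Since $t\in\R$ was arbitrary, we obtain $\varphi^{\alpha^0}(\R,x,u) \subset E^{\alpha^0}$, i.e., $(u,x) \in \EC^{\alpha^0}$, which is the desired inclusion. The only non-trivial ingredient is the joint continuity of $\varphi^{\alpha}$ in $(\alpha,t,x,u)$; everything else is a direct application of the definitions, so I expect no real obstacle beyond invoking that standard ODE fact.
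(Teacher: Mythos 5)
Your proof is correct and follows essentially the same route as the paper: the paper argues by contradiction, but its core steps are identical — extract a convergent sequence $(u^k,x^k)\in\EC^{\alpha^k}$ by compactness of $\UC\tm M$, use joint continuity of $(\alpha,t,x,u)\mapsto\varphi^{\alpha}(t,x,u)$, and conclude from upper semicontinuity of $\alpha\mapsto E^{\alpha}$ and closedness of $E^{\alpha^0}$ that the limit lies in $\EC^{\alpha^0}$. Your direct formulation via the sequential $\limsup$ characterization is just a cosmetic variant of the same argument.
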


\begin{proof}
Assume to the contrary that there exists a neighborhood $V$ of $\EC^{\alpha^0}$ such that for every $\delta>0$ there is $\alpha\in A$ with $\|\alpha - \alpha^0\|<\delta$ and $\EC^{\alpha} \not\subset V$. Then there are a sequence $\alpha^k \rightarrow \alpha$ and $(u^k,x^k) \in \EC^{\alpha^k}$ such that $(u^k,x^k) \notin V$. However, by assumption we have $\dist(x^k,E^{\alpha^0}) \rightarrow 0$. By compactness, we may assume $x^k \rightarrow x \in E^{\alpha^0}$ and $u^k \rightarrow u\in\UC$. Since $\varphi^{\alpha^k}(t,x^k,u^k) \in E^{\alpha^k}$ for all $t\in\R$, we have $\varphi^{\alpha^0}(t,x,u) = \lim_{k\rightarrow\infty}\varphi^{\alpha^k}(t,x^k,u^k) \in E^{\alpha^0}$ for all $t\in\R$, again by the assumption. This implies $(u,x) \in \EC^{\alpha^0}$, in contradiction to $(u^k,x^k) \notin V$ for all $k$.%
\end{proof}

The proof of the following proposition is based on ideas from the proof of \cite[Thm.~5.3.10]{CKl}.%

\begin{proposition}\label{prop_usc}
Consider the parametrized control-affine system \eqref{eq_sigmaalpha}. For each $\alpha\in A$ let $E^{\alpha}$ be a compact chain control set of $\Sigma^{\alpha}$, such that $\alpha \mapsto E^{\alpha}$ is upper semicontinuous at some $\alpha^0\in A$. Moreover, let $\gamma^{\alpha}$ be an additive cocycle for $\Phi^{\alpha}$ over $\EC^{\alpha}$ such that $(t,\alpha,u,x) \mapsto \gamma^{\alpha}_t(u,x)$ is continuous on $\R \tm \{\alpha^0\} \tm \EC^{\alpha^0}$. Then the Morse spectrum over $\EC^{\alpha}$ is upper semicontinuous at $\alpha^0$:%
\begin{equation}\label{eq_morsespec_incl}
  \limsup_{\alpha \rightarrow \alpha^0}\Sigma_{\Mo}(\gamma^{\alpha},\EC^{\alpha}) \subset \Sigma_{\Mo}(\gamma^{\alpha^0},\EC^{\alpha^0}).%
\end{equation}
As a consequence, the real-valued map $\alpha \mapsto \inf\Sigma_{\Mo}(\gamma^{\alpha},\EC^{\alpha})$ is lower semicontinuous at $\alpha^0$.%
\end{proposition}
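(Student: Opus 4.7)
The plan is to establish the set inclusion \eqref{eq_morsespec_incl} directly; the lower semicontinuity of $\alpha\mapsto\inf\Sigma_{\Mo}(\gamma^{\alpha},\EC^{\alpha})$ then follows at once, since by properties (1) and (3) the infimum of the (compact interval) Morse spectrum is attained as a Lyapunov exponent and therefore belongs to $\Sigma_{\Mo}$. Hence any subsequential limit $\mu$ of $\inf\Sigma_{\Mo}(\gamma^{\alpha^k},\EC^{\alpha^k})$ along $\alpha^k\to\alpha^0$ lies in $\limsup_{\alpha\to\alpha^0}\Sigma_{\Mo}(\gamma^{\alpha},\EC^{\alpha})\subset\Sigma_{\Mo}(\gamma^{\alpha^0},\EC^{\alpha^0})$ by \eqref{eq_morsespec_incl}, and hence satisfies $\mu\geq\inf\Sigma_{\Mo}(\gamma^{\alpha^0},\EC^{\alpha^0})$.

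For the inclusion, take $\lambda$ in the $\limsup$, realized by $\alpha^k\to\alpha^0$ and $\lambda^k\to\lambda$ with $\lambda^k\in\Sigma_{\Mo}(\gamma^{\alpha^k},\EC^{\alpha^k})$, fix $\ep,T>0$, and aim to produce an $(\ep,T)$-chain in $\EC^{\alpha^0}$ with $\gamma^{\alpha^0}$-Morse exponent within $\ep$ of $\lambda$. For $k$ large, I would pick a $(\delta,T)$-chain $\zeta^k=((u^k_i,x^k_i);(T^k_i))$ in $\EC^{\alpha^k}$ with $|\gamma^{\alpha^k}(\zeta^k)-\lambda^k|<\delta$, and then use Lemma \ref{lem_usclifts} to project each node to a nearby $(\hat u^k_i,\hat x^k_i)\in\EC^{\alpha^0}$ with $d((u^k_i,x^k_i),(\hat u^k_i,\hat x^k_i))<\eta$. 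Writing the per-jump chain error of the candidate $\hat\zeta^k$ via the triangle inequality splits it into a flow-perturbation term $d(\Phi^{\alpha^0}_{T^k_i}(\hat u^k_i,\hat x^k_i),\Phi^{\alpha^k}_{T^k_i}(u^k_i,x^k_i))$, the original $\delta$-error, and a projection $\eta$-error; the analogous decomposition controls the cocycle difference via the continuity hypothesis on $\gamma$.

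The main obstacle is that the jump times $T^k_i$ carry no a priori upper bound, whereas joint continuity of $(\alpha,u,x,t)\mapsto(\Phi^{\alpha}_t(u,x),\gamma^{\alpha}_t(u,x))$ at $\alpha^0$ is locally uniform only on compact time intervals; this defeats a naive estimate of the flow- and cocycle-perturbation terms. I resolve this by a preprocessing step: subdivide each jump $T^k_i$ into $q^k_i:=\lfloor T^k_i/T\rfloor\geq 1$ equal sub-jumps of length $s^k_i:=T^k_i/q^k_i\in[T,2T)$, inserting the true orbit points $\Phi^{\alpha^k}_{j s^k_i}(u^k_i,x^k_i)\in\EC^{\alpha^k}$ as new nodes. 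Each new sub-jump has zero error except the terminal one of each original block, which inherits the $\delta$-error; by the additive cocycle property, both the numerator $\sum_i\gamma^{\alpha^k}_{T^k_i}(u^k_i,x^k_i)$ and the denominator $\sum_iT^k_i$ of the Morse exponent are preserved exactly, so the refined chain has the same Morse exponent as $\zeta^k$. Crucially, all sub-jump times now lie in the universal window $[T,2T)$, so joint continuity on the compact set $\{\alpha^0\}\tm B_{\eta}(\EC^{\alpha^0})\tm[0,2T]$ yields moduli $\omega_{\Phi},\omega_{\gamma}$ that depend only on $2T$ and vanish as $\eta,\|\alpha^k-\alpha^0\|\to 0$, controlling every per-jump error uniformly.

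Choosing $\eta,\delta<\ep/3$ and $k$ so large that $\omega_{\Phi},\omega_{\gamma}<\ep/3$ and $|\lambda^k-\lambda|<\ep/3$, each chain-jump error of $\hat\zeta^k$ is $<\ep$, and averaging the per-jump cocycle errors with denominator $\sum_i s^k_i\geq nT$ (where $n$ is the total number of sub-jumps) bounds $|\gamma^{\alpha^0}(\hat\zeta^k)-\gamma^{\alpha^k}(\zeta^k)|\leq\omega_{\gamma}/T$ independently of $n$. Hence $\hat\zeta^k$ is an $(\ep,T)$-chain in $\EC^{\alpha^0}$ with $|\gamma^{\alpha^0}(\hat\zeta^k)-\lambda|<\ep$, which since $\ep,T$ were arbitrary gives $\lambda\in\Sigma_{\Mo}(\gamma^{\alpha^0},\EC^{\alpha^0})$ as required.
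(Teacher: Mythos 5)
Your proposal is correct and follows essentially the same route as the paper's proof: subdividing each jump time into pieces of length in $[T,2T]$ along the true orbit (which preserves the Morse exponent by additivity), replacing the nodes by nearby points of $\EC^{\alpha^0}$ using upper semicontinuity of the lifts (Lemma \ref{lem_usclifts}), and controlling the chain and cocycle errors by uniform continuity on a compact time window together with a triangle inequality, exactly as in the paper's estimates \eqref{eq_5320}--\eqref{eq_5321}. The only minor omission is the paper's preliminary remark that $\Sigma_{\Mo}(\gamma^{\alpha},\EC^{\alpha})$ lies in a fixed compact interval for $\alpha$ near $\alpha^0$ (from boundedness of $\gamma^{\alpha}_t$ for bounded $t$ plus additivity), which is needed in your final deduction to exclude the degenerate case $\inf\Sigma_{\Mo}(\gamma^{\alpha^k},\EC^{\alpha^k})\rightarrow-\infty$, where no finite subsequential limit exists.
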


\begin{proof}
By the continuity assumption, we find that $\gamma^{\alpha}_t(u,x)$ is bounded when $\alpha$ is close to $\alpha^0$ and $t$ is bounded. Using the additivity of the cocycles $\gamma^{\alpha}$, this easily implies that $\Sigma_{\Mo}(\gamma^{\alpha},\EC^{\alpha})$ is contained in some compact interval for $\alpha$ close to $\alpha^0$. Hence, \eqref{eq_morsespec_incl} is equivalent to the upper semicontinuity at $\alpha^0$.%

Now let $\lambda$ be an element of the left-hand side of \eqref{eq_morsespec_incl}. This means, there exist sequences $\alpha^k \rightarrow \alpha^0$ in $A$ and $\lambda^k \in \Sigma_{\Mo}(\gamma^{\alpha^k},\EC^{\alpha^k})$ such that $\lambda^k \rightarrow \lambda$. Since $\Sigma_{\Mo}(\gamma^{\alpha^0},\EC^{\alpha^0})$ is closed, it suffices to show that for every $\delta>0$ and $\ep,T>0$ there is a periodic $(\ep,T)$-chain $\zeta$ in $\EC^{\alpha^0}$ for the control flow $\Phi^{\alpha^0}$ with%
\begin{equation}\label{eq_5319}
  |\lambda - \gamma^{\alpha^0}(\zeta)| < \delta.%
\end{equation}
For brevity, in the following we write $\EC^k = \EC^{\alpha^k}$, $\Phi^k = \Phi^{\alpha^k}$ and $\gamma^k = \gamma^{\alpha^k}$. Choose $\delta>0$, $\ep>0$ and $T>1$. Then there is $k_0\in\N$ such that for all $k\geq k_0$ the following holds: For every $(u,x)\in\EC^k$ there is $(v,y)\in\EC^0$ such that%
\begin{equation}\label{eq_5320}
  d(\Phi^k_t(u,x),\Phi^0_t(v,y)) < \frac{\ep}{3} \mbox{\quad for all\ } t\in[0,2T].%
\end{equation}
This follows from uniform continuity of $(t,\alpha,u,x) \mapsto \Phi^{\alpha}_t(u,x)$ on the compact set $[0,2T] \tm A \tm K$, where $K$ is an appropriately chosen compact neighborhood of $\EC^0$, and upper semicontinuity of $\alpha \mapsto \EC^{\alpha}$ at $\alpha^0$ (see Lemma \ref{lem_usclifts}). Similarly, we may choose $k_0$ large enough such that for all $k\geq k_0$,%
\begin{equation}\label{eq_5321}
  \left|\gamma^k_t(u,x) - \gamma^0_t(v,y)\right| < \frac{\delta}{2} \mbox{\quad for all\ } t \in [0,2T].%
\end{equation}
Indeed, assume to the contrary that there are sequences $(u^k,x^k) \in \EC^k$ and $(v^k,y^k) \in \EC^0$ with $d((u^k,x^k),(v^k,y^k)) \rightarrow 0$ and $t_k \in [0,2T]$ with $|\gamma^k_{t_k}(u^k,x^k) - \gamma^0_{t_k}(v^k,y^k)| \geq \delta/2$ for arbitrarily large $k$. By compactness of $[0,2T]$ and $\EC^0$ we may assume $t_k \rightarrow t\in[0,2T]$ and $(u^k,x^k),(v^k,y^k) \rightarrow (u^*,x^*) \in \EC^0$. By the continuity assumption on $\gamma$, this implies $\gamma^k_{t_k}(u^k,x^k) \rightarrow \gamma^0_t(u^*,x^*)$ and $\gamma^0_{t_k}(v^k,y^k) \rightarrow \gamma^0_t(u^*,x^*)$, a contradiction.%

From $\lambda^k \rightarrow \lambda$ with $\lambda^k \in \Sigma_{\Mo}(\gamma^k,\EC^k)$ it follows that for $k$ large enough there are periodic $(\ep/3,T)$-chains $\zeta^k$ in $\EC^k$ for $\Phi^k$, given by $n^k\in\N$, $T_0^k,\ldots,T_{n^k-1}^k \geq T$, $(v_0^k,y_0^k),\ldots,(v_{n^k}^k,y_{n^k}^k) \in \EC^k$ with%
\begin{equation}\label{eq_5322}
  d(\Phi^k_{T_j^k}(v^k_j,y^k_j),(v^k_{j+1},y^k_{j+1})) < \frac{\ep}{3} \mbox{\quad for\ } j = 0,1,\ldots,n^k-1%
\end{equation}
and%
\begin{equation}\label{eq_5323}
  |\lambda - \gamma^k(\zeta^k)| < \frac{\delta}{2}.%
\end{equation}
Here we use that the sets $\EC^k$ are chain transitive, and hence the Morse spectra over these sets can be obtained from periodic chains only. Now fix $k$ and partition each interval $[0,T_j^k]$, $j=0,\ldots,n^k-1$, into pieces of length $\tau_{ji}$, $i=0,\ldots,l_j-1$, with $T \leq \tau_{ji} \leq 2T$. Then%
\begin{equation*}
  T_j^k = \sum_{i=0}^{l_j-1}\tau_{ji} \mbox{\quad and\quad} T_j^k \geq l_jT > l_j.%
\end{equation*}
Define $\xi_0^j := (v_j^k,y_j^k)$ and $\xi_{i+1}^j := \Phi^k_{\tau_{ji}}(\xi_i^j)$ for $i=0,\ldots,l_j-2$. Via \eqref{eq_5320} and \eqref{eq_5321} replace each point $\xi_i^j$ with a corresponding point $\tilde{\xi}_i^j \in \EC^0$. We claim that the points $\tilde{\xi}_0^0,\ldots,\tilde{\xi}_{l_0-1}^0,\tilde{\xi}_0^1,\ldots,\tilde{\xi}_{l_1-1}^1,\ldots,\tilde{\xi}_0^{n_k-1},\ldots,\tilde{\xi}_{l_k-1}^{n_k-1}, \tilde{\xi}_{0}^{n_k}$ together with the times $\tau_{ji} \geq T$ form an $(\ep,T)$-chain $\zeta^0$ in $\EC^0$ for $\Phi^0$. Indeed, if $i \in \{0,1,\ldots,l_j-2\}$, then%
\begin{equation*}
  d(\Phi^0_{\tau_{ji}}(\tilde{\xi}_i^j),\tilde{\xi}_{i+1}^j) \leq d(\Phi^0_{\tau_{ji}}(\tilde{\xi}_i^j),\Phi^k_{\tau_{ji}}(\xi_i^j)) + d(\Phi^k_{\tau_{ji}}(\xi_i^j),\xi_{i+1}^j) + d(\xi_{i+1}^j,\tilde{\xi}_{i+1}^j)
	< \frac{\ep}{3} + 0 + \frac{\ep}{3} < \ep%
\end{equation*}
and (using \eqref{eq_5322})%
\begin{eqnarray*}
  d(\Phi^0_{\tau_{jl_j-1}}(\tilde{\xi}_{l_j-1}^j),\tilde{\xi}_0^{j+1}) &\leq& d(\Phi^0_{\tau_{jl_j-1}}(\tilde{\xi}_{l_j-1}^j),\Phi^k_{\tau_{jl_j-1}}(\xi_{l_j-1}^j)) + d(\Phi^k_{\tau_{jl_j-1}}(\xi_{l_j-1}^j),\xi_0^{j+1})\\
	\quad && + d(\xi_0^{j+1},\tilde{\xi}_0^{j+1})\\
	                                                                    &<& \frac{\ep}{3} + d(\Phi^k_{T_j^k}(v_j^k,y_j^k),(v_{j+1}^k,y_{j+1}^k)) + \frac{\ep}{3} < \ep.%
\end{eqnarray*}
Now we compare the Morse exponents $\gamma^0(\zeta^0)$ and $\gamma^k(\zeta^k)$, using \eqref{eq_5321}:%
\begin{eqnarray*}
  \gamma^0(\zeta^0) &=&  \frac{1}{\sum_j T_j^k}\sum_{j=0}^{n^k-1}\sum_{i=0}^{l_j-1}\gamma^0(\tau_{ji},\tilde{\xi}_i^j) < \frac{1}{\sum_j T_j^k}\sum_{j=0}^{n^k-1}\sum_{i=0}^{l_j-1}\left(\gamma^k(\tau_{ji},\xi_i^j) + \frac{\delta}{2}\right)\\
	&=& \frac{1}{\sum_j T_j^k}\left(\frac{\delta}{2}\sum_{j=0}^{n^k-1}l_j + \sum_{j=0}^{n^k-1} \gamma^k(T_j^k,(v_j^k,y_j^k))\right) < \gamma^k(\zeta^k) + \frac{\delta}{2},%
\end{eqnarray*}
where we use that $l_j < T_j^k$. The estimate in the other direction works analogously. Hence,%
\begin{equation*}
  |\lambda - \gamma^0(\zeta^0)| \leq |\lambda - \gamma^k(\zeta^k)| + |\gamma^k(\zeta^k) - \gamma^0(\zeta^0)| < \frac{\delta}{2} + \frac{\delta}{2} = \delta,%
\end{equation*}
which completes the proof of the upper semicontinuity statement. Lower semicontinuity of $\alpha \mapsto \inf\Sigma_{\Mo}(\gamma^{\alpha},\EC^{\alpha})$ immediately follows.%
\end{proof}

\subsection{The Floquet spectrum over a control set}

Let $D$ be a relatively compact control set with nonempty interior of the control-affine system \eqref{eq_cas} and write $\DC$ for its lift to $\UC \tm M$. Then we define the \emph{Floquet spectrum} $\Sigma_{\Fl}(\gamma,\DC)$ for an additive cocycle $\gamma$ over $\DC$ by%
\begin{equation*}
  \Sigma_{\Fl}(\gamma,\DC) := \left\{ \lambda\in\R\ :\ \exists \mbox{ periodic } (u,x) \in \UC \tm \inner D \mbox{ with } \lambda = \lambda((u,x);\gamma)\right\}.%
\end{equation*}
Obviously, the Floquet spectrum is contained in the Lyapunov spectrum. Moreover, under the assumption of local accessibility on $\inner D$, the Floquet spectrum is nonempty, because periodic orbits can be constructed through every point $x\in\inner D$.%

The following proposition will allow us to study the dependence of the Floquet spectrum on parameters for the additive cocycle given by \eqref{eq_unstabledet_cocycle}.%

\begin{proposition}\label{prop_floquet}
Let $E$ be a regularly uniformly hyperbolic chain control set of the control-affine system \eqref{eq_cas} and consider the additive cocycle \eqref{eq_unstabledet_cocycle} over $\EC$. Furthermore, consider the subadditive cocycle%
\begin{equation*}
  \kappa:\R \tm \EC \rightarrow \R,\quad \kappa_t(u,x) = \log^+\left\|\rmd\varphi_{t,u}(x)^{\wedge}\right\|.%
\end{equation*}
Let $D$ be the control set with $\cl D = E$. Then%
\begin{equation*}
  \Sigma_{\Fl}(\gamma,\DC) = \left\{ \lim_{t\rightarrow\infty}\frac{1}{t}\kappa_t(u,x) : (u,x) \in \UC \tm M \mbox{ periodic},\ x \in \inner D \right\}.%
\end{equation*}
\end{proposition}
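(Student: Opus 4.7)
The strategy is to show that both sides of the claimed equality range over the same set of periodic orbits and take the same value on each such orbit. For the Floquet spectrum the relevant orbits are periodic $(u,x) \in \UC \tm \inner D$, while on the right-hand side they are periodic $(u,x) \in \UC \tm M$ with $x \in \inner D$; in both cases local accessibility on $\inner D$ together with the no-return property of $D$ keeps the whole orbit in $\inner D$, so both sides index the same collection of periodic orbits, which lie in $\EC$ since $\inner D \subset D \subset E$. It therefore suffices, for each such periodic orbit of minimal period $T>0$, to establish the pointwise identity
\begin{equation*}
  \lambda((u,x);\gamma) = \lim_{t\to\infty}\frac{1}{t}\kappa_t(u,x).
\end{equation*}

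Set $A := \rmd\varphi_{T,u}(x):T_xM \to T_xM$. Since $(u,x) \in \EC$ and $\Phi_T(u,x) = (u,x)$, axiom (H1) makes $A$ preserve the splitting $T_xM = E^+_{u,x} \oplus E^-_{u,x}$, while (H2) together with the spectral radius formula forces every eigenvalue $\mu_i$ of $A$ to satisfy $|\mu_i|\neq 1$, the eigenvalues of modulus greater (resp.\ less) than $1$ being precisely those of $A|_{E^+_{u,x}}$ (resp.\ $A|_{E^-_{u,x}}$). On the additive side this gives $\gamma_T(u,x) = \log\bigl|\det A|_{E^+_{u,x}}\bigr| = \sum_{|\mu_i|>1}\log|\mu_i|$, and periodicity yields $\lambda((u,x);\gamma) = \gamma_T(u,x)/T$. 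On the subadditive side, the chain rule and periodicity give $\rmd\varphi_{nT,u}(x) = A^n$, and the exterior-sum norm has the singular-value expression $\log\|A^{n\wedge}\| = \sum_i \log^+\sigma_i(A^n)$. Invoking the classical asymptotic $\frac{1}{n}\log\sigma_i(A^n) \to \log|\mu_{\pi(i)}|$ (for a suitable ordering $\pi$) and using $|\mu_i| \neq 1$, I would conclude
\begin{equation*}
  \lim_{n\to\infty}\frac{1}{nT}\kappa_{nT}(u,x) = \frac{1}{T}\sum_{|\mu_i|>1}\log|\mu_i| = \frac{1}{T}\gamma_T(u,x).
\end{equation*}

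To pass from $t = nT$ to arbitrary real $t$, subadditivity of $\kappa$ together with boundedness of $\kappa_s(u,x)$ for $s \in [0,T]$ (from compactness of $\EC$ and continuity of the derivative) sandwiches $\kappa_t(u,x)$ within a constant of $\kappa_{\lfloor t/T\rfloor T}(u,x)$, so the two limits agree. The main obstacle I anticipate is the singular-value asymptotic: for matrices with nontrivial Jordan blocks the convergence $\frac{1}{n}\log\sigma_i(A^n) \to \log|\mu_{\pi(i)}|$ requires a carefully chosen ordering, and one must check that the cutoff $\log^+$ interacts well with the limit precisely when no eigenvalue lies on the unit circle. Uniform hyperbolicity excludes this borderline case and, via the uniformly bounded angle between $E^+_{u,x}$ and $E^-_{u,x}$, ensures that the expansion on $E^+_{u,x}$ alone drives the exponential growth of $\|A^{n\wedge}\|$, yielding the clean identification $\sum_i \log^+|\mu_i| = \log\bigl|\det A|_{E^+_{u,x}}\bigr|$ that closes the argument.
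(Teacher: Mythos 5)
Your argument is correct, and its skeleton is exactly the paper's: reduce the set equality to the pointwise identity $\lambda((u,x);\gamma)=\lim_{t\to\infty}\frac1t\kappa_t(u,x)=\frac1T\gamma_T(u,x)$ for each periodic $(u,x)$ with $x\in\inner D$ (the no-return property putting the whole periodic orbit inside $E$, hence in $\EC$). The only difference is how that identity is established: the paper disposes of it in one line by citing \cite[Prop.~3]{DSK}, whereas you prove it directly by analyzing the period map $A=\rmd\varphi_{T,u}(x)$ — (H1) gives invariance of the splitting, (H2) plus the spectral radius formula pushes the eigenvalues of $A|_{E^\pm_{u,x}}$ strictly off the unit circle, and then $\gamma_T(u,x)=\sum_{|\mu_i|>1}\log|\mu_i|$ while $\frac1{nT}\log\|A^{n\wedge}\|\to\frac1T\sum_i\log^+|\mu_i|$ via the classical singular-value asymptotics (cleanest through Gelfand's formula applied to the exterior powers, which also sidesteps the Jordan-ordering worry you raise), with subadditivity interpolating between $t=nT$ and general $t$. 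So your proof buys self-containedness and makes explicit where hyperbolicity is used, at the cost of redoing the linear algebra that the cited proposition packages; two cosmetic points: the no-return property only guarantees the periodic orbit stays in $D$ (not $\inner D$), which is all you need, and the closing remark about the angle between $E^+_{u,x}$ and $E^-_{u,x}$ is superfluous, since the identification $\sum_i\log^+|\mu_i|=\log\bigl|\det A|_{E^+_{u,x}}\bigr|$ is immediate once the eigenvalues are located as above.
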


\begin{proof}
It suffices to show that%
\begin{equation*}
  \lim_{t\rightarrow\infty}\frac{1}{t}\kappa_t(u,x) = \frac{1}{\tau}\gamma_{\tau}(u,x),%
\end{equation*}
when $\tau>0$ is the period of $(u,x)$. This follows from \cite[Prop.~3]{DSK}.%
\end{proof}

The subadditive cocycle $\kappa$ defined above has the nice property that $\kappa_t(u,x)$ only depends on $u|_{[0,t]}$ for $t>0$ and not on the entire function $u$. This property is very helpful in many arguments. Altogether, this justifies the following definitions.%

\begin{definition}
The {\bf Floquet spectrum} of a subadditive cocycle $\kappa$ over the lift $\DC$ of a control set $D$ is defined as%
\begin{equation*}
  \Sigma_{\Fl}(\kappa,\DC) := \left\{ \lim_{t\rightarrow\infty}\frac{1}{t}\kappa_t(u,x) : (u,x) \in \UC \tm M \mbox{ periodic, } x \in \inner D \right\}.%
\end{equation*}
We say that $\kappa$ has the {\bf restriction property} if, for all $t>0$ and $(u,x),(v,x) \in \DC$, the values $\kappa_t(u,x)$ and $\kappa_t(v,x)$ coincide whenever $u(s) = v(s)$ for almost all $s\in[0,t]$.%
\end{definition}

The following proposition yields the upper semicontinuity of $\inf\Sigma_{\Fl}$ for a parametrized control-affine system, under appropriate assumptions. The proof uses some ideas from \cite[Thm.~6.2.24]{CKl}.%

\begin{proposition}\label{prop_lsc}
We make the following assumptions for the parametrized control-affine system \eqref{eq_sigmaalpha} and some $\alpha^0 \in \inner A$:%
\begin{enumerate}
\item[(i)] For each $\alpha$ in a neighborhood of $\alpha^0$ the system $\Sigma^{\alpha}$ is locally accessible.%
\item[(ii)] There exists a control set $D^{\alpha^0}$ of $\Sigma^{\alpha^0}$ with nonempty interior. For $\alpha$ in a neighborhood of $\alpha^0$ let $D^{\alpha}$ denote the unique control set of $\Sigma^{\alpha}$ with $\inner D^{\alpha^0} \cap \inner D^{\alpha} \neq \emptyset$.%
\item[(iii)] The system $\Sigma^{\alpha^0}$ satisfies the Lie algebra rank condition on $\inner D^{\alpha^0}$.%
\item[(iv)] $\kappa^{\alpha}:\R \tm (\UC \tm M) \rightarrow \R$, $\alpha\in A$, is a family of nonnegative subadditive cocycles for $\Phi^{\alpha}$ with the restriction property such that $(t,\alpha,u,x) \mapsto \kappa^{\alpha}_t(u,x)$ is continuous.%
\end{enumerate}
Then the map $\alpha \mapsto \inf\Sigma_{\Fl}(\kappa^{\alpha},\DC^{\alpha})$ is upper semicontinuous at $\alpha = \alpha^0$.%
\end{proposition}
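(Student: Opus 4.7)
The aim is to establish
\[
\limsup_{\alpha \to \alpha^0}\inf\Sigma_{\Fl}(\kappa^{\alpha},\DC^{\alpha}) \leq \inf\Sigma_{\Fl}(\kappa^{\alpha^0},\DC^{\alpha^0}).
\]
Equivalently, given any $\ep>0$ and any $\lambda \in \Sigma_{\Fl}(\kappa^{\alpha^0},\DC^{\alpha^0})$, I will produce, for every $\alpha$ close enough to $\alpha^0$, an element of $\Sigma_{\Fl}(\kappa^{\alpha},\DC^{\alpha})$ bounded above by $\lambda+\ep$. My plan is to take a periodic trajectory for $\Sigma^{\alpha^0}$ realizing $\lambda$, feed its control into the perturbed system $\Sigma^{\alpha}$, close the resulting almost-periodic orbit by a short controllability correction, and then exploit the restriction property together with continuity of $\kappa$ to transfer the cocycle estimate from $\alpha^0$ to $\alpha$.

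Concretely, I fix a $\tau$-periodic pair $(u^0,x^0)$ for $\Sigma^{\alpha^0}$ with $x^0\in\inner D^{\alpha^0}$ and $\lambda=\lambda(u^0,x^0;\kappa^{\alpha^0})$; by Fekete's lemma applied to the subadditive sequence $n\mapsto\kappa^{\alpha^0}_{n\tau}(u^0,x^0)$, I may choose $N\in\N$ so large that $T:=N\tau$ satisfies $\kappa^{\alpha^0}_T(u^0,x^0)/T<\lambda+\ep/3$ and simultaneously $C/T<\ep/3$, where $C$ is an a priori uniform bound for $\kappa^{\alpha}_s(u,x)$ over the compact product $A\tm[0,t_0]\tm\UC\tm M$ with $t_0$ as specified next. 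For $\alpha$ close to $\alpha^0$, continuity of $\varphi^{\alpha}$ in $\alpha$ yields $y^{\alpha}:=\varphi^{\alpha}(T,x^0,u^0)\to x^0$. Invoking the persistence of the Lie algebra rank condition of $\Sigma^{\alpha^0}$ near $x^0$ together with the uniform controllability-time bound of \cite[Lem.~3.2.21]{CKl} applied to the parametrized family, I obtain $t_0>0$ and, for each $\alpha$ in some neighborhood of $\alpha^0$, a control $v^{\alpha}\in\UC$ of duration $s_{\alpha}\in(0,t_0]$ that steers $y^{\alpha}$ back to $x^0$ along a trajectory contained in $\inner D^{\alpha}$. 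The $(T+s_{\alpha})$-periodic extension $u^{\alpha}$ of the concatenation $u^0|_{[0,T]}v^{\alpha}|_{[0,s_{\alpha}]}$ then defines a periodic orbit $(u^{\alpha},x^0)\in\UC\tm\inner D^{\alpha}$ of $\Sigma^{\alpha}$ contributing to $\Sigma_{\Fl}(\kappa^{\alpha},\DC^{\alpha})$.

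Subadditivity combined with the restriction property now gives
\[
\lambda(u^{\alpha},x^0;\kappa^{\alpha}) \leq \frac{\kappa^{\alpha}_{T+s_{\alpha}}(u^{\alpha},x^0)}{T+s_{\alpha}} \leq \frac{\kappa^{\alpha}_T(u^0,x^0)+\kappa^{\alpha}_{s_{\alpha}}(v^{\alpha},y^{\alpha})}{T},
\]
where I used that $u^{\alpha}\equiv u^0$ a.e.\ on $[0,T]$, that $\Phi^{\alpha}_T(u^{\alpha},x^0)=(\theta_Tu^{\alpha},y^{\alpha})$ with $\theta_Tu^{\alpha}\equiv v^{\alpha}$ on $[0,s_{\alpha}]$, and that $\kappa^{\alpha}\geq 0$. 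Continuity of $\kappa$ at $(T,\alpha^0,u^0,x^0)$ forces $\kappa^{\alpha}_T(u^0,x^0)\to\kappa^{\alpha^0}_T(u^0,x^0)$, so $\kappa^{\alpha}_T(u^0,x^0)/T<\lambda+2\ep/3$ for $\alpha$ close enough, while the closing contribution is bounded by $C/T<\ep/3$. Hence $\lambda(u^{\alpha},x^0;\kappa^{\alpha})<\lambda+\ep$, and letting first $\ep\downarrow 0$ and then $\lambda$ decrease to $\inf\Sigma_{\Fl}(\kappa^{\alpha^0},\DC^{\alpha^0})$ delivers the claimed upper semicontinuity. The principal obstacle is the closing step: producing $v^{\alpha}$ with duration bound $t_0$ independent of $\alpha$ and with trajectory inside $\inner D^{\alpha}$, together with the verification $x^0\in\inner D^{\alpha}$. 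The former rests on a parametric version of \cite[Lem.~3.2.21]{CKl} on a fixed compact neighborhood of the orbit $\varphi^{\alpha^0}([0,T],x^0,u^0)$, where local accessibility is inherited by $\Sigma^{\alpha}$ through the $C^{\infty}$-dependence of the $f_i$; the latter combines Corollary~\ref{cor1} and the no-return property with the openness of $\inner D^{\alpha}$ to guarantee that a whole neighborhood of $x^0$ lies in $\inner D^{\alpha}$ for all nearby $\alpha$.
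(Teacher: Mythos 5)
Your overall strategy coincides with the paper's: run the control of a near-optimal periodic orbit of $\Sigma^{\alpha^0}$ for a long time $T$ in the perturbed system, close the orbit by a short correction, and transfer the estimate via subadditivity, nonnegativity, the restriction property and continuity of $\kappa$. The cocycle bookkeeping in your final display is fine and mirrors the paper's argument. The genuine gap is the closing step, which you flag as ``the principal obstacle'' but then dispose of with tools that do not deliver it. A ``parametric version of \cite[Lem.~3.2.21]{CKl}'' is not available off the shelf: that lemma bounds hitting times inside the interior of a control set of one fixed system, and to invoke it for $\Sigma^{\alpha}$ you must already know that $y^{\alpha}$ and $x^0$ lie in $\inner D^{\alpha}$, with bounds uniform in $\alpha$ --- which is precisely the nontrivial content. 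Local accessibility of $\Sigma^{\alpha}$ gives nonempty interiors of reachable sets, not exact steering of $y^{\alpha}$ to the prescribed point $x^0$. Likewise, your justification of $x^0\in\inner D^{\alpha}$ fails twice over: Corollary~\ref{cor1} assumes a regularly uniformly hyperbolic chain control set, which is not among the hypotheses of this proposition (it is stated for general parametrized control-affine systems), and even where it applies, Hausdorff continuity of $\alpha\mapsto\cl D^{\alpha}$ controls closures only and does not imply that a fixed interior point of $D^{\alpha^0}$, let alone a whole neighborhood of it, stays in $\inner D^{\alpha}$.

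What the paper does at this point, and what your argument is missing, is a robust exact-controllability device built from assumption (iii): using the Lie algebra rank condition at $x^0$, one forms the endpoint map $\psi(\alpha,\tau_1,\ldots,\tau_d)$ of a concatenation of constant controls run backward from $x^0$, which has full rank in the switching times at $\alpha^0$; the inverse and implicit function theorems then produce a fixed open set $V$ and a small time $S$ such that $V\subset\OC^{\alpha,-}_{\leq S}(x^0)$ for \emph{all} $\alpha$ near $\alpha^0$. The return is then organized as follows: steer from $x^0$ into $V$ by a control designed for the nominal system $\Sigma^{\alpha^0}$ (here \cite[Lem.~3.2.21]{CKl} is applied only to $\alpha^0$, giving a bound $H$), observe that this fixed finite-time trajectory still lands in the open set $V$ for the perturbed system by continuous dependence on $\alpha$, and finally close up exactly to $x^0$ in time at most $S$ via the implicit-function-theorem construction. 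This is what yields a closing control of duration at most $H+S$, uniformly in $\alpha$, together with the exact return that your $v^{\alpha}$ and $t_0$ are supposed to provide. Without such an argument (or a genuine substitute for it), your proof does not go through.
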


\begin{proof}
The proposition is proved in three steps.%

\emph{Step 1.} From assumption (i) and \cite[Thm.~3.2.28]{CKl} the existence of control sets $D^{\alpha}$ as specified in assumption (ii) follows. Let $A_0$ denote the neighborhood of $\alpha^0$, where these control sets exist. We fix $\ep>0$ and write%
\begin{equation*}
  i(\alpha) := \inf\Sigma_{\Fl}(\kappa^{\alpha},\DC^{\alpha}) \mbox{\quad for all\ } \alpha\in A_0.%
\end{equation*}
Then there is a $\Phi^{\alpha^0}$-periodic point $(u_0,x_0)$ with $x_0 \in \inner D^{\alpha^0}$ of some period $\tau_0>0$ such that%
\begin{equation}\label{eq_lsc_1}
  \frac{1}{t}\kappa^{\alpha^0}_t(u_0,x_0) < i(\alpha^0) + \ep \mbox{\quad for all\ } t \geq t_0,%
\end{equation}
where $t_0 > 0$ is chosen sufficiently large. We choose a compact neighborhood $K$ of $x_0$ with $K \subset \inner D^{\alpha^0}$. It is well-known that the first hitting time, i.e., the minimal time to steer from any $x\in K$ to any $y\in K$ is bounded by some $H<\infty$ (see \cite[Lem.~3.2.21]{CKl}).%

\emph{Step 2.} Let $S>0$ be chosen small enough so that $\OC^{\alpha^0,-}_{\leq S}(x_0) \subset K$. We show that there exists an open set $V \subset M$ so that $V \subset \OC^{\alpha,-}_{\leq S}(x_0)$ for all $\alpha$ in a neighborhood $A_1 \subset A_0$ of $\alpha^0$.%

From the Lie algebra rank condition at $x_0$ it follows that there exist $u_1,\ldots,u_d$ ($d=\dim M$) such that the following map is continuously differentiable and its partial derivative w.r.t.~the second component has full rank $d$ at some point of the form $(\alpha^0,\tau^0)$ with $\tau^0 = (\tau_1,\ldots,\tau_d)$ and $\tau_i>0$ arbitrarily small, say $<S/(2d)$:%
\begin{equation*}
  \psi:A \tm (0,\infty)^d \rightarrow M,\quad (\alpha,(\tau_1,\ldots,\tau_d)) \mapsto \varphi^{\alpha}\Bigl(-\sum_{i=1}^d\tau_i,x_0,u^{\tau}\Bigr),%
\end{equation*}
where $u^{\tau}(t) = u_i$ on $(\tau_0+\tau_1+\ldots+\tau_{i-1},\tau_0+\tau_1+\ldots+\tau_i)$ with $\tau_0 = 0$ for $i=1,\ldots,d$ (for these statements, see \cite[Thm.~9 in Ch.~4]{Son}). By the inverse function theorem, we find open neighborhoods $N \subset \R^d$ of $\tau$ and $W \subset M$ of $\psi(\alpha^0,\tau)$ such that $\psi(\alpha^0,\cdot)$ maps $N$ to $W$ diffeomorphically. For every $y\in W$, the implicit function theorem applied to $\psi$ guarantees that $y \in \OC^{\alpha,-}_{\leq S}(x_0)$ for all $\alpha$ sufficiently close to $\alpha^0$, say $\|\alpha-\alpha^0\|<\delta$. By possibly shrinking $W$ to a smaller open set $V$, we can choose $\delta$ independently of $y$. Hence, we have found an open set $V \subset \OC^{\alpha,-}_{\leq S}(x_0)$ for all $\|\alpha-\alpha^0\|<\delta$. Let $A_1 := A_0 \cap B(\alpha^0,\delta)$.%

\emph{Step 3.} We choose $n\in\N$ large enough so that%
\begin{equation}\label{eq_lsc_2}
  n\tau_0 \geq t_0 \mbox{\quad and\quad} \frac{1}{n\tau_0}\kappa^{\alpha}_t(u,x) \leq \ep \mbox{\quad for all\ } t\in [0,H+S] \mbox{ and } (\alpha,u,x) \in A \tm \UC \tm M,%
\end{equation}
where we use our assumption that $A$ and $M$ are compact so that $\kappa_t^{\alpha}(u,x)$ is bounded on $[0,H+S] \tm A \tm \UC \tm M$. We choose a neighborhood $A_2 \subset A_1$ of $\alpha^0$ so that%
\begin{equation}\label{eq_lsc_3}
  \frac{1}{n\tau_0}|\kappa^{\alpha}_{n\tau_0}(u_0,x_0) - \kappa^{\alpha^0}_{n\tau_0}(u_0,x_0)| < \ep \mbox{\quad for all\ } \alpha \in A_2.%
\end{equation}
Now we pick $\alpha\in A_2$ and construct a periodic point for $\Phi^{\alpha}$ as follows. First steer from $x_0$ to $x_0$ with $\Sigma^{\alpha^0}$ in time $n\tau_0$ by applying $u_0$ $n$ times. Let $u_1$ be a control so that $\varphi^{\alpha^0}(t_1,x_0,u_1) \in V$ with $t_1 \leq H$, which is possible, because $V \subset K$. By continuous dependence of the solutions on $\alpha$, we find that $\varphi^{\alpha}(n\tau_0+t_1,x_0,u_0^nu_1) \in V$ for all $\alpha$ in a neighborhood $A_3 \subset A_2$ of $\alpha^0$.%

By Step 2, there exists another control $u_2$ and a time $t_2 \leq S$ with $\varphi^{\alpha}(t_2,\varphi^{\alpha}(n\tau_0+t_1,x_0,u_0^nu_1),u_2) = x_0$. We put $u := u_0^nu_1u_2$, which is a control defined on $[0,n\tau_0+t_1+t_2]$, and we extend this control $(n\tau_0+t_1+t_2)$-periodically to $\R$. This yields the periodic point $(u,x_0)$ for $\Sigma^{\alpha}$ with period $T:=n\tau_0+t_1+t_2$. Using subadditivity, nonnegativity and the restriction property for $\kappa^{\alpha}$, we obtain%
\begin{eqnarray*}
  \frac{1}{T}\kappa^{\alpha}_T(u,x_0) &\leq& \frac{1}{T}\kappa^{\alpha}_{n\tau_0}(u_0,x_0) + \frac{1}{T}\kappa^{\alpha}_{t_1+t_2}(u_1u_2,\varphi^{\alpha}(n\tau_0,x_0,u_0))\\
	&\stackrel{\eqref{eq_lsc_2}}{\leq}& \frac{1}{n\tau_0}\kappa^{\alpha}_{n\tau_0}(u_0,x_0) + \ep \stackrel{\eqref{eq_lsc_3}}{\leq} \frac{1}{n\tau_0}\kappa^{\alpha^0}_{n\tau_0}(u_0,x_0) + 2\ep \stackrel{\eqref{eq_lsc_1}}{<} i(\alpha^0) + 3\ep.%
\end{eqnarray*}
Now we find that%
\begin{eqnarray*}
  i(\alpha) &\leq& \lim_{t\rightarrow\infty}\frac{1}{t}\kappa^{\alpha}_t(u,x_0) = \lim_{\N\ni m\rightarrow\infty}\frac{1}{mT} \kappa^{\alpha}_{mT}(u,x_0)\\
	&=& \inf_{m\in\N}\frac{1}{mT}\kappa^{\alpha}_{mT}(u,x_0) \leq \frac{1}{T}\kappa^{\alpha}_T(u,x_0) < i(\alpha^0) + 3\ep,%
\end{eqnarray*}
where we use that the sequence $a_m := \kappa^{\alpha}_{mT}(u,x_0)$, $m\in\N$, is subadditive. This implies $\limsup_{\alpha\rightarrow\alpha^0}i(\alpha) \leq i(\alpha^0)$, which completes the proof.%
\end{proof}

\section{The main result}\label{sec_mainres}

In this section, we make use of the additive cocycle%
\begin{equation*}
  \gamma^{\alpha}_t(u,x) = \log J^+\rmd\varphi^{\alpha}_{t,u}(x)%
\end{equation*}
over a uniformly hyperbolic chain control set $E^{\alpha}$ of $\Sigma^{\alpha}$ and the subadditive cocycle%
\begin{equation*}
  \kappa^{\alpha}_t(u,x) = \log^+\|\rmd\varphi^{\alpha}_{t,u}(x)^{\wedge}\|,%
\end{equation*}
defined over $\UC \tm M$ for each of the systems $\Sigma^{\alpha}$, $\alpha \in A$. Observe that the map $(t,\alpha,u,x) \mapsto \kappa^{\alpha}_t(u,x)$ is continuous, because $\rmd\varphi^{\alpha}_{t,u}(x)$ varies continuously with $(t,\alpha,u,x)$. (This is shown with arguments that can be found in \cite[Prop.~1.17 and Thm.~1.1]{Ka1}).%

\begin{proposition}\label{prop_robustness}
In Corollary \ref{cor1}, let $E^{\alpha}$ denote the unique chain control set with $D^{\alpha} \subset E^{\alpha}$. Then, for $\alpha$ sufficiently close to $\alpha^0$, $E^{\alpha}$ is uniformly hyperbolic, its lift $\EC^{\alpha}$ is an isolated invariant set and $\EC^{\alpha}$ is a graph over $\UC$. In particular, $E^{\alpha} = \cl D^{\alpha}$. If we additionally assume that the Lie algebra rank condition holds on $\inner E^{\alpha}$, then%
\begin{equation*}
  h^*_{\inv}(E^{\alpha}) = \inf\Sigma_{\Mo}(\gamma^{\alpha},\EC^{\alpha}) = \inf\Sigma_{\Fl}(\kappa^{\alpha},\DC^{\alpha}).%
\end{equation*}
\end{proposition}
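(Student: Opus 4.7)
The strategy is to first deduce the structural claims via the shadowing lemma applied to the time-$T$ map of the control flow, and then obtain the spectral identities by combining Theorem \ref{thm_iedr_1} with standard relations between the Morse and Floquet spectra. Concretely, I fix $T > 0$ and apply Lemma \ref{lem_shadowing} to the skew-product homeomorphism $f = \Phi^{\alpha^0}_T$ on $\UC \tm M$, whose base map is $\theta_T$. Since $\EC^{\alpha^0}$ is a uniformly hyperbolic isolated invariant set, the lemma yields a neighborhood $U$ of $\EC^{\alpha^0}$ and a $C^{0,1}$-neighborhood $W$ of $f$ such that every $g \in W$ possesses a unique isolated invariant uniformly hyperbolic set $\Delta_g \subset U$. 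Continuous dependence of $\rmd\varphi^{\alpha}$ on $\alpha$ puts $\Phi^{\alpha}_T$ into $W$ for $\alpha$ close to $\alpha^0$, and an upper-semicontinuity argument in the spirit of Lemma \ref{lem_usclifts}, combined with the Hausdorff convergence $\cl D^{\alpha} \to E^{\alpha^0}$ from Corollary \ref{cor1}, shows $\EC^{\alpha} \subset U$ for such $\alpha$. Invariance of $\EC^{\alpha}$ together with maximality of $\Delta_{\Phi^{\alpha}_T}$ inside $U$ then forces $\EC^{\alpha} = \Delta_{\Phi^{\alpha}_T}$, yielding uniform hyperbolicity and isolation.

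For the graph property I would invoke parts (i) and (ii) of Lemma \ref{lem_shadowing} in the unique-shadowing mode. Let $x^{\alpha^0}:\UC\to E^{\alpha^0}$ denote the continuous section for $\EC^{\alpha^0}$. For each $u\in\UC$ the sequence $k \mapsto \Phi^{\alpha^0}_{kT}(u, x^{\alpha^0}(u))$ is a genuine $\Phi^{\alpha^0}_T$-orbit, hence a $\delta$-pseudo-orbit of $\Phi^{\alpha}_T$ with $\delta \to 0$ as $\alpha \to \alpha^0$, and is therefore uniquely $\ep$-shadowed by an orbit of $\Phi^{\alpha}_T$. Because both flows share the base dynamics $\theta$, the shadowing orbit must start at the same $u$, which produces a unique $x = x^{\alpha}(u)$ with $(u,x) \in \Delta_{\Phi^{\alpha}_T} = \EC^{\alpha}$. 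Combined with Theorem \ref{thm_closure}, whose hypothesis $\inner E^{\alpha} \neq \emptyset$ follows from Corollary \ref{cor1}(ii), this gives $E^{\alpha} = \cl D^{\alpha}$.

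Assuming the Lie algebra rank condition on $\inner E^{\alpha}$, Theorem \ref{thm_iedr_1} yields
\begin{equation*}
h^*_{\inv}(E^{\alpha}) = \inf_{(u,x)\in\EC^{\alpha}}\limsup_{t\to\infty}\frac{1}{t}\gamma^{\alpha}_t(u,x).
\end{equation*}
For each $(u,x)\in\EC^{\alpha}$ this $\limsup$ arises as a limit of Morse exponents of the $(0,T)$-chains $(u,x),\Phi^{\alpha}_T(u,x),\ldots$, hence lies in $\Sigma_{\Mo}(\gamma^{\alpha},\EC^{\alpha})$; conversely, property (3) of the Morse spectrum ensures that $\inf\Sigma_{\Mo}$ is itself a Lyapunov exponent, and so attained as such a $\limsup$. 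This yields the first identity $h^*_{\inv}(E^{\alpha}) = \inf\Sigma_{\Mo}(\gamma^{\alpha},\EC^{\alpha})$. The inclusion $\Sigma_{\Fl}(\gamma^{\alpha},\DC^{\alpha}) \subset \Sigma_{\Mo}(\gamma^{\alpha},\EC^{\alpha})$ gives $\inf\Sigma_{\Mo} \leq \inf\Sigma_{\Fl}(\gamma^{\alpha},\DC^{\alpha})$, while the upper estimate \eqref{eq_ie_perest} from the proof of Theorem \ref{thm_iedr_1} realizes $h^*_{\inv}(E^{\alpha})$ as an infimum of periodic Lyapunov exponents through $\inner\UC\tm\inner D^{\alpha}$, supplying the reverse bound. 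Proposition \ref{prop_floquet} finally identifies $\Sigma_{\Fl}(\gamma^{\alpha},\DC^{\alpha}) = \Sigma_{\Fl}(\kappa^{\alpha},\DC^{\alpha})$, closing the chain.

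The delicate step is the identification $\EC^{\alpha} = \Delta_{\Phi^{\alpha}_T}$: one must prevent $\EC^{\alpha}$ from leaking out of the isolating neighborhood $U$ and rule out spurious invariant pieces accumulating inside $U$ outside $\EC^{\alpha}$. Both issues are controlled by the robustness of hyperbolic, chain transitive, isolated invariant sets under $C^{0,1}$-perturbations together with the Hausdorff continuity of $\cl D^{\alpha}$ from Corollary \ref{cor1}, which anchors $\EC^{\alpha}$ near $\EC^{\alpha^0}$; the shadowing lemma then transfers chain transitivity from $\EC^{\alpha^0}$ onto any invariant set that stays in $U$.
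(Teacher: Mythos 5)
Your spectral half is fine and essentially matches the paper (Theorem \ref{thm_iedr_1}, properties (2)--(3) of the Morse spectrum, the periodic upper estimate, and Proposition \ref{prop_floquet}), but the structural half has a genuine gap exactly at the step you yourself flag as delicate: the identification $\EC^{\alpha} = \Delta_{\Phi^{\alpha}_T}$ and the uniqueness part of the graph property. First, Lemma \ref{lem_shadowing}(iii) only asserts that $\Delta \subset U$ is an isolated invariant set, i.e.\ maximal in \emph{some} isolating neighborhood, not the maximal invariant set in $U$; so ``invariance of $\EC^{\alpha}$ plus maximality of $\Delta$ inside $U$'' does not even yield $\EC^{\alpha} \subset \Delta$ as you state it. (Moreover, to place $\EC^{\alpha}$ inside $U$ you need upper semicontinuity of $\alpha \mapsto E^{\alpha}$; Hausdorff continuity of $\cl D^{\alpha}$ from Corollary \ref{cor1} only controls the subset $\cl D^{\alpha} \subset E^{\alpha}$ and gives no upper bound on $E^{\alpha}$. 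The paper cites \cite[Cor.~3.4.7]{CKl} for this, and then obtains $\EC^{\alpha} \subset \cl\FC^{\alpha} = \FC^{\alpha}$ by shadowing the bi-infinite pseudo-orbits that chain transitivity of $\EC^{\alpha}$ provides.) More seriously, the reverse inclusion $\Delta \subset \EC^{\alpha}$ is never proved: your closing claim that ``the shadowing lemma transfers chain transitivity from $\EC^{\alpha^0}$ onto any invariant set that stays in $U$'' is not a consequence of Lemma \ref{lem_shadowing} and is not true for arbitrary invariant sets in $U$. The paper closes this by the argument of \cite[Thm.~3.1]{MZh}: unique shadowing, together with the hypothesis that $\EC^{\alpha^0}$ is a graph over $\UC$, produces a topological skew-conjugacy $H(u,x) = (u,h(u,x))$ from $\EC^{\alpha^0}$ \emph{onto} $\FC^{\alpha}$; surjectivity of $H$ is what makes $\FC^{\alpha}$ chain transitive and a graph, and then maximality of $\EC^{\alpha}$ among invariant chain transitive sets of the control flow gives $\FC^{\alpha} = \EC^{\alpha}$.

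The same omission undermines your graph argument. Shadowing the orbit through $(u,x^{\alpha^0}(u))$ produces \emph{one} point of $\Delta$ in the fiber over $u$, and uniqueness of the shadowing orbit only says that this particular pseudo-orbit has a single shadow; it does not exclude further points of $\EC^{\alpha}$ over the same $u$ that are not close to that pseudo-orbit. Fiberwise uniqueness again requires the surjectivity of the conjugacy $H$ (every orbit in $\Delta$ arises as the shadow of an orbit of $\EC^{\alpha^0}$ over the same base point), i.e.\ the structural-stability statement of \cite{MZh}, not just parts (i)--(ii) of Lemma \ref{lem_shadowing}. A further small point: after identifying $\EC^{\alpha}$ with $\Delta$, isolatedness and hyperbolicity are so far only established for the time-$T$ map, and the paper includes a short additional argument to pass from the discretization back to the flow; your write-up skips this as well.
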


\begin{proof}
Consider the time-$1$-discretizations of the control flows $\Phi^{\alpha}$, i.e., the maps $\Phi^{\alpha}_1:\UC \tm M \rightarrow \UC \tm M$. These maps satisfy the assumptions imposed on the map $f$ in Subsection \ref{subsec_shadowing}. Moreover, the set $E^{\alpha^0}$ is an isolated invariant uniformly hyperbolic set for $\Phi^{\alpha^0}_1$ as in Lemma \ref{lem_shadowing}. Part (iii) of this lemma implies the existence of an isolated invariant uniformly hyperbolic set $\FC^{\alpha} \subset \UC \tm M$ for $\Phi^{\alpha}_1$, when $\alpha$ is close enough to $\alpha^0$. Consider some $(u,x) \in \FC^{\alpha}$ and assume to the contrary that $\Phi^{\alpha}_t(u,x) \notin \FC^{\alpha}$ for some $t\in\R$. Replacing $(u,x)$ with another point in the orbit $\{\Phi^{\alpha}_t(u,x) : t \in \R\}$, we can assume that $t>0$ is small enough so that $\Phi^{\alpha}_k(u,x) \in \FC^{\alpha}$ for all $k\in\Z$ and $\Phi^{\alpha}_{k+t}(u,x)$ is contained in an isolating neighborhood of $\FC^{\alpha}$ for all $k\in\Z$, leading to the contradiction $\Phi^{\alpha}_t(u,x) \in \FC^{\alpha}$. Hence, $\FC^{\alpha}$ is also an isolated invariant set of the control flow $\Phi^{\alpha}$.%

We can show that $\FC^{\alpha} = \EC^{\alpha}$ as follows. For $\alpha$ close enough to $\alpha^0$, both $\FC^{\alpha}$ and $\EC^{\alpha}$ are contained in a neighborhood $W$ of $\EC^{\alpha^0}$, where the shadowing lemma applies, observing that $\alpha \mapsto \EC^{\alpha}$ is upper semicontinuous at $\alpha = \alpha^0$ (see \cite[Cor.~3.4.7]{CKl} and Lemma \ref{lem_usclifts}). As a chain transitive set for the flow $\Phi^{\alpha}$ containing periodic orbits, the set $\EC^{\alpha}$ is also chain transitive for the map $\Phi^{\alpha}_1$ (see \cite[Prop.~3.1.11]{CK2}). In particular, every point $(u,x) \in \EC^{\alpha}$ is the initial point of a bi-infinite $\delta$-pseudo-orbit in $W$ for $\Phi^{\alpha}_1$, where $\delta>0$ can be chosen arbitrarily. By Lemma \ref{lem_shadowing} we know that for every $\ep>0$ there is a $\delta>0$ so that every bi-infinite $\delta$-pseudo-orbit is $\ep$-shadowed by a point in $\FC^{\alpha}$. This implies that $\EC^{\alpha} \subset \cl\FC^{\alpha} = \FC^{\alpha}$.%

With the same argumentation as in \cite[Thm.~3.1]{MZh} we can show that the restriction of $\Phi^{\alpha^0}_1$ to $\EC^{\alpha^0}$ is topologically conjugate to the restriction of $\Phi^{\alpha}_1$ to $\FC^{\alpha}$, using the assumption that $\EC^{\alpha^0}$ is a graph over $\UC$. The topological conjugacy is of the form $H(u,x) = (u,h(u,x))$, $H:\EC^{\alpha^0} \rightarrow \FC^{\alpha}$. (A conjugacy of this form is called a topological skew-conjugacy in \cite{MZh}.) This immediately implies that also $\FC^{\alpha}$ is a graph over $\UC$. Moreover, it implies that $\FC^{\alpha}$ is chain transitive, and hence, by maximality, $\FC^{\alpha} = \EC^{\alpha}$. It is not hard to show that $E^{\alpha}$ is a uniformly hyperbolic chain control set, using that $\FC^{\alpha}$ is a uniformly hyperbolic set for $\Phi^{\alpha}_1$. Finally, the formula for the invariance entropy of $E^{\alpha}$ follows from the following considerations. From Theorem \ref{thm_iedr_1} we know that%
\begin{equation*}
  h^*_{\inv}(E^{\alpha}) = \inf_{(u,x) \in \EC^{\alpha}}\limsup_{t\rightarrow\infty}\frac{1}{t}\gamma_t^{\alpha}(u,x).%
\end{equation*}
By the proof of \cite[Thm.~5.4]{DSK}, we obtain the same infimum if we only consider periodic $(u,x) \in \inner\UC \tm \inner D$. Since the endpoints of the Morse spectrum are Lyapunov exponents, it follows that%
\begin{equation*}
  h^*_{\inv}(E^{\alpha}) = \inf\Sigma_{\Fl}(\gamma^{\alpha},\DC^{\alpha}) = \inf\Sigma_{\Ly}(\gamma^{\alpha},\EC^{\alpha}) = \inf\Sigma_{\Mo}(\gamma^{\alpha},\EC^{\alpha}).%
\end{equation*}
Finally, from Proposition \ref{prop_floquet} it follows that $\inf\Sigma_{\Fl}(\gamma^{\alpha},\DC^{\alpha}) = \inf\Sigma_{\Fl}(\kappa^{\alpha},\DC^{\alpha})$.%
\end{proof}

Now we are ready to prove the main result of the paper.%

\begin{theorem}
Consider the parametrized control-affine system \eqref{eq_sigmaalpha} and some $\alpha^0 \in \inner A$. Let $E^{\alpha^0}$ be a regularly uniformly hyperbolic chain control set of $\Sigma^{\alpha^0}$ and assume that $\Sigma^{\alpha}$ satisfies the Lie algebra rank condition for all $\alpha$ in a neighborhood of $\alpha^0$. Then there exists a neighborhood $W$ of $\alpha^0$ such that for every $\alpha\in W$ there exists a chain control set $E^{\alpha}$ of $\Sigma^{\alpha}$, so that the map%
\begin{equation*}
  \alpha \mapsto h_{\inv}^*(E^{\alpha})%
\end{equation*}
is well-defined and continuous at $\alpha = \alpha^0$.%
\end{theorem}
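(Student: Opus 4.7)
The plan is to sandwich the real-valued map $\alpha \mapsto h^*_{\inv}(E^{\alpha})$ between a lower semicontinuous and an upper semicontinuous function at $\alpha^0$, using the two different representations
\[
  h^*_{\inv}(E^{\alpha}) = \inf\Sigma_{\Mo}(\gamma^{\alpha},\EC^{\alpha}) = \inf\Sigma_{\Fl}(\kappa^{\alpha},\DC^{\alpha})
\]
supplied by Proposition \ref{prop_robustness}. The first quantity will be shown to be lower semicontinuous at $\alpha^0$ via Proposition \ref{prop_usc}, and the second upper semicontinuous via Proposition \ref{prop_lsc}; combining these with the equalities above yields continuity.

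First I would fix a neighborhood $W \subset A$ of $\alpha^0$ on which Corollary \ref{cor1} and Proposition \ref{prop_robustness} simultaneously apply. For every $\alpha\in W$ this produces a control set $D^{\alpha}$ of $\Sigma^{\alpha}$ whose closure is the unique chain control set $E^{\alpha}$ containing it; moreover $E^{\alpha}$ is regularly uniformly hyperbolic, $\EC^{\alpha}$ is a graph over $\UC$, and $h^*_{\inv}(E^{\alpha})$ admits the two spectral expressions above. The Hausdorff-continuity of $\alpha\mapsto\cl D^{\alpha}$ at $\alpha^0$ from Corollary \ref{cor1}(ii) gives, in particular, upper semicontinuity of the set-valued map $\alpha\mapsto E^{\alpha}$ at $\alpha^0$, which in turn (by Lemma \ref{lem_usclifts}) yields upper semicontinuity of $\alpha\mapsto\EC^{\alpha}$ at $\alpha^0$.

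For lower semicontinuity of $\alpha\mapsto h^*_{\inv}(E^{\alpha})$, I would apply Proposition \ref{prop_usc} to $\gamma^{\alpha}$. The upper semicontinuity hypothesis on $\alpha\mapsto E^{\alpha}$ has just been established, and the required continuity of $(t,\alpha,u,x)\mapsto \gamma^{\alpha}_t(u,x)$ on $\R\tm\{\alpha^0\}\tm\EC^{\alpha^0}$ reduces to continuous dependence of the unstable subbundle $E^+_{u,x}$ on $(\alpha,u,x)$, since $\rmd\varphi^{\alpha}_{t,u}(x)$ is jointly continuous in all variables. I expect this to be the main technical step: it requires that the stable/unstable splittings on the perturbed sets $\EC^{\alpha}$ converge, in an appropriate sense, to the splitting on $\EC^{\alpha^0}$. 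The standard fixed-point construction of $E^{\pm}$ as graphs over the unperturbed bundles, applied on a common isolating neighborhood provided by the shadowing lemma already invoked in the proof of Proposition \ref{prop_robustness}, yields exactly this joint continuity, because the defining contraction depends continuously on $\alpha$.

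For upper semicontinuity of $\alpha\mapsto h^*_{\inv}(E^{\alpha})$, I would apply Proposition \ref{prop_lsc} to the family $\kappa^{\alpha}$. The four hypotheses are checked as follows: local accessibility of $\Sigma^{\alpha}$ near $\alpha^0$ is implied by the assumed Lie algebra rank condition; the existence of $D^{\alpha}$ with $\inner D^{\alpha}\cap\inner D^{\alpha^0}\neq\emptyset$ is Corollary \ref{cor1}(i); the Lie algebra rank condition on $\inner D^{\alpha^0}$ holds because $\inner D^{\alpha^0}\subset\inner E^{\alpha^0}$; and $\kappa^{\alpha}$ is a nonnegative subadditive cocycle with the restriction property whose joint continuity in $(t,\alpha,u,x)$ was recorded just before Proposition \ref{prop_robustness}. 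Putting the two semicontinuities together at $\alpha^0$ gives
\[
  \limsup_{\alpha\to\alpha^0} h^*_{\inv}(E^{\alpha}) \leq h^*_{\inv}(E^{\alpha^0}) \leq \liminf_{\alpha\to\alpha^0} h^*_{\inv}(E^{\alpha}),
\]
which is the claimed continuity at $\alpha^0$.
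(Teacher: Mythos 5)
Your proposal is correct and follows essentially the same route as the paper: Corollary \ref{cor1} and Proposition \ref{prop_robustness} to get $E^{\alpha}=\cl D^{\alpha}$ together with the two spectral formulas, Proposition \ref{prop_lsc} applied to $\kappa^{\alpha}$ for upper semicontinuity, and Proposition \ref{prop_usc} applied to $\gamma^{\alpha}$ for lower semicontinuity. The only (inessential) difference is the justification of the key technical step, continuity of the unstable bundle in $(\alpha,u,x)$: you invoke the standard graph-transform/fixed-point persistence argument on a common isolating neighborhood, whereas the paper packages the same fact as uniform hyperbolicity of the extended compact set $\{(\alpha,u,x):(u,x)\in\EC^{\alpha}\}$ for the skew-product flow over $A_0\tm\UC$; both are standard and adequate.
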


\begin{proof}
By Corollary \ref{cor1}, there exists a neighborhood $W_0$ of $\alpha^0$ so that for every $\alpha \in W_0$ a control set $D^{\alpha}$ of $\Sigma^{\alpha}$ exists with $\inner D^{\alpha} \cap \inner D^{\alpha^0} \neq \emptyset$ and $\alpha \mapsto \cl D^{\alpha}$ is continuous at $\alpha=\alpha^0$ in the Hausdorff metric. From the assumption that the Lie algebra rank condition holds for $\alpha$ close to $\alpha^0$, it follows that there exist unique chain control sets $E^{\alpha}$ with $D^{\alpha} \subset E^{\alpha}$ for all $\alpha$ in a neighborhood $W_1 \subset W_0$ of $\alpha^0$. By Proposition \ref{prop_robustness}, there exists another neighborhood $W_2 \subset W_1$ of $\alpha^0$ so that $E^{\alpha} = \cl D^{\alpha}$ for $\alpha \in W_2$ and%
\begin{equation*}
  h_{\inv}^*(E^{\alpha}) = \inf\Sigma_{\Mo}(\gamma^{\alpha},\EC^{\alpha}) = \inf\Sigma_{\Fl}(\kappa^{\alpha},\DC^{\alpha}).%
\end{equation*}
Since all assumptions of Proposition \ref{prop_lsc} are satisfied, the map $\alpha \mapsto h_{\inv}^*(E^{\alpha})$ is upper semicontinuous at $\alpha = \alpha^0$. Now we prove that the continuity assumption of Proposition \ref{prop_usc} is satisfied for $(t,\alpha,u,x) \mapsto \gamma^{\alpha}_t(u,x)$, implying the lower semicontinuity, and hence the assertion of the theorem. To this end, it is sufficient to show that the unstable subspace $E^{\alpha,+}_{u,x}$ varies continuously with $(\alpha,u,x)$ on $\{\alpha^0\} \tm \EC^{\alpha^0}$. This follows from the fact that for a sufficiently small compact neighborhood $A_0$ of $\alpha^0$, the set%
\begin{equation*}
  \EC := \{ (\alpha,u,x) \in A \tm \UC \tm M\ :\ (u,x) \in \EC^{\alpha},\ \alpha \in A_0 \}%
\end{equation*}
is a compact invariant uniformly hyperbolic set for the extended skew-product flow $\Psi_t(\alpha,u,x) = (\alpha,\Phi^{\alpha}_t(u,x))$, $t\in\R$, with base space $A_0 \tm \UC$. Here we use the following three facts:%
\begin{enumerate}
\item[(i)] By changing the parameter $\alpha$ slightly, the constants used in the definition of uniform hyperbolicity also change slightly (see, e.g., the proof of \cite[Prop.~6.4.6]{KHa}).%
\item[(ii)] The continuous dependence of the unstable subspace is a consequence of the uniform contraction and expansion estimates (see, e.g., \cite[Lem.~6.4]{Ka1}).%
\item[(iii)] The set $\EC$ is closed, because the map $\alpha \mapsto E^{\alpha}$ is not only continuous (in the Hausdorff metric) at $\alpha = \alpha^0$, but on a whole neighborhood of $\alpha^0$. This holds, because the assumptions of Theorem \ref{thm1} are satisfied for every $\alpha$ in a neighborhood of $\alpha^0$.%
\end{enumerate}
The proof is complete.%
\end{proof}

\begin{corollary}
Under the assumptions of the preceding theorem, $E^{\alpha}$ is a regularly uniformly hyperbolic chain control set for all $\alpha$ in a neighborhood $A_0$ of $\alpha^0$ and $\alpha \mapsto h_{\inv}^*(E^{\alpha})$ is continuous on $A_0$.%
\end{corollary}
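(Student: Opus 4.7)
The plan has two parts: first, show $E^{\alpha}$ is regularly uniformly hyperbolic on a whole neighborhood $A_0$ of $\alpha^0$; second, upgrade the pointwise continuity at $\alpha^0$ established in the preceding theorem to continuity on $A_0$ by reapplying the theorem at every point of $A_0$.

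For the first part, I would unpack the four conditions (a)--(d) of regular uniform hyperbolicity at a parameter $\alpha$ close to $\alpha^0$. Proposition \ref{prop_robustness} directly supplies (a) uniform hyperbolicity of $E^{\alpha}$, (c) the graph property of $\EC^{\alpha}$ over $\UC$, and (d) isolated invariance of $\EC^{\alpha}$; it also yields the identity $E^{\alpha} = \cl D^{\alpha}$. For (b), nonemptiness of $\inner E^{\alpha}$ is immediate from this identity together with Corollary \ref{cor1}(i) (which gives $\inner D^{\alpha} \cap \inner D^{\alpha^0} \neq \emptyset$, in particular $\inner D^{\alpha} \neq \emptyset$), and the Lie algebra rank condition on $\inner E^{\alpha}$ follows at once from the standing hypothesis of the theorem that LARC holds for $\Sigma^{\alpha}$ on a neighborhood of $\alpha^0$. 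Intersecting the various neighborhoods produced by these results yields the desired $A_0$.

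For continuity on $A_0$, I would pick an arbitrary $\alpha' \in A_0$ and apply the preceding theorem with $\alpha'$ playing the role of $\alpha^0$: by the first part, $E^{\alpha'}$ is regularly uniformly hyperbolic, and LARC still holds in a neighborhood of $\alpha'$ inside $A_0$. The theorem then produces a neighborhood $W(\alpha') \subset A_0$ of $\alpha'$ on which $\alpha \mapsto h_{\inv}^*(E^{\alpha})$ is continuous at $\alpha'$. Since $\alpha' \in A_0$ is arbitrary, continuity throughout $A_0$ follows.

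The main obstacle is a matching problem hidden inside the bootstrap step: when the theorem is applied at $\alpha'$, it identifies, for each $\alpha \in W(\alpha')$, a chain control set $\tilde E^{\alpha}$ built from a control set $\tilde D^{\alpha}$ characterized by $\inner \tilde D^{\alpha} \cap \inner D^{\alpha'} \neq \emptyset$, and one must verify $\tilde E^{\alpha} = E^{\alpha}$ so that the continuity conclusion refers to the same family. This reduces, via the uniqueness clause of Corollary \ref{cor1}(i) and the uniqueness of the chain control set containing a given control set, to the claim that $\inner D^{\alpha} \cap \inner D^{\alpha'} \neq \emptyset$ for $\alpha, \alpha'$ both sufficiently close to $\alpha^0$. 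This pairwise overlap should be extractable from the Hausdorff continuity of $\alpha \mapsto \cl D^{\alpha}$ at $\alpha^0$ together with the fact that control sets with nonempty interior are generated by their interior, but it is the delicate technical point on which the matching, and hence the whole bootstrap, rests.
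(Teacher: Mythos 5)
Your proposal is correct and is essentially the intended argument: the paper states the corollary without a separate proof precisely because Proposition \ref{prop_robustness} plus the standing Lie algebra rank condition hypothesis gives regular uniform hyperbolicity of $E^{\alpha}$ on a whole neighborhood (items (a), (c), (d) and $E^{\alpha}=\cl D^{\alpha}$ from the proposition, item (b) from $\inner D^{\alpha}\neq\emptyset$ and the rank condition), after which the theorem is re-applied at each nearby parameter. The matching issue you flag is real but closes without much effort, and in a slightly stronger way than the Hausdorff-continuity route you sketch: the lower-semicontinuity result underlying Theorem \ref{thm1} and Corollary \ref{cor1} (\cite[Thm.~3.2.28]{CKl}) provides not merely $\inner D^{\alpha}\cap\inner D^{\alpha^0}\neq\emptyset$ but a fixed point $x_0\in\inner D^{\alpha^0}$ with $x_0\in\inner D^{\alpha}$ for all $\alpha$ in a neighborhood of $\alpha^0$; hence for $\alpha,\alpha'$ in a suitably shrunk $A_0$ one has $x_0\in\inner D^{\alpha}\cap\inner D^{\alpha'}$, and since under local accessibility two control sets whose interiors meet coincide, the control set obtained by re-basing at $\alpha'$ is $D^{\alpha}$ itself; uniqueness of the chain control set containing $D^{\alpha}$ then gives $\tilde E^{\alpha}=E^{\alpha}$, so the continuity conclusion at $\alpha'$ refers to the original family. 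An alternative way to settle the same point is to match the lifts directly: by the proof of Proposition \ref{prop_robustness}, $\EC^{\alpha}$ is the maximal invariant set of $\Phi^{\alpha}$ in a fixed isolating neighborhood of $\EC^{\alpha^0}$, and the same characterization applied at $\alpha'$ singles out the identical set for $\alpha$ close to $\alpha'$.
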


\end{document}